\listfiles
\documentclass[12pt]{article}

\usepackage{oldlfont}
\usepackage{amsfonts}
\usepackage{amsmath}
\usepackage{amssymb}
\usepackage{color}
\usepackage{float}
\usepackage{graphicx}
\usepackage{eepic}
\usepackage{a4wide}

\usepackage[T1]{fontenc}
\usepackage[latin1]{inputenc}


\topmargin -0.5cm



\newcommand{\be}{\begin{equation}}
\newcommand{\ee}{\end{equation}}
\newcommand{\bea}{\begin{eqnarray}}
\newcommand{\eea}{\end{eqnarray}}
\newcommand{\beas}{\begin{eqnarray*}}
\newcommand{\eeas}{\end{eqnarray*}}
\newcommand{\ba}{\begin{array}}
\newcommand{\ea}{\end{array}}


\newtheorem{theorem}{Theorem} [section]
\newtheorem{lemma}{Lemma} [section]
\newtheorem{corollary}{Corollary} [section]
\newtheorem{prop}{Proposition} [section]
\newtheorem{remark}{Remark} [section]

\newtheorem{counterex}{Counterexample} [section]
\newenvironment{proof}{\noindent\textbf{Proof.}\ }
              {\nopagebreak\hbox{ }\hfill$\Box$\bigskip}
\newcommand{\qed}{\nopagebreak\hbox{ }\hfill$\Box$\bigskip}


\newcommand{\grad}{\nabla}
\renewcommand{\div}{{\mathrm div}}

\newcommand{\bcurl}{{\mathbf curl}}


\newcommand{\divg}{{\mathrm div}_{\Gamma}}


\newcommand{\eps}{\varepsilon}
\newcommand{\G}{\Gamma}


\newcommand{\tG}{\widetilde\Gamma}
\newcommand{\tbn}{{\tilde{\mathbf n}}}
\newcommand{\tH}{\tilde H}


\newcommand{\boldf}{{\mathbf f}}

\newcommand{\bn}{{\mathbf n}}
\newcommand{\bq}{{\mathbf q}}
\newcommand{\bu}{{\mathbf u}}
\newcommand{\bv}{{\mathbf v}}
\newcommand{\bw}{{\mathbf w}}
\newcommand{\bx}{{\mathbf x}}
\newcommand{\boldy}{{\mathbf y}}
\newcommand{\bz}{{\mathbf z}}

\newcommand{\bH}{{\mathbf H}}
\newcommand{\bL}{{\mathbf L}}

\newcommand{\bV}{{\mathbf V}}
\newcommand{\bW}{{\mathbf W}}
\newcommand{\bX}{{\mathbf X}}


\newcommand{\bnu}{\hbox{\mathversion{bold}$\nu$}}

\newcommand{\bPhi}{\hbox{\mathversion{bold}$\Phi$}}
\newcommand{\bPsi}{\hbox{\mathversion{bold}$\Psi$}}


\newcommand{\CM}{{\cal M}}

\newcommand{\CQ}{{\cal Q}}
\newcommand{\CT}{{\cal T}}



\newcommand{\bCR}{\boldsymbol {\cal R}}

\newcommand{\bCT}{\boldsymbol {\cal T}}


\newcommand{\sfE}{\operatorname{\mathsf{E}}}
\newcommand{\sfL}{\operatorname{\mathsf{L}}}

\newcommand{\sfR}{\operatorname{\mathsf{R}}}


\newcommand{\stack}[2]{\mathrel{\mathop{#2}\limits_{\scriptstyle #1}}}
\newcommand{\bzero}{{\mathbf 0}}
\newcommand{\<}  {\langle}
\renewcommand{\>}{\rangle}
\newcommand{\field}[1]{\mathbb{#1}}


\definecolor{otherblue}{rgb}{0,0.3,0.6}
\definecolor{gr}{rgb}   {0.,   0.69,   0.23 }
\def\rev#1{\textcolor{blue}{#1}}     
\def\rblf#1{\textcolor{otherblue}{#1}}

\def\rkserge#1{\textcolor{gr}{#1}}
\def\rev#1{\textcolor{black}{#1}}
\def\rblf#1{\textcolor{black}{#1}}

\def\rkserge#1{\textcolor{black}{#1}}

\title{The BEM with graded meshes for the electric field integral equation
  on polyhedral surfaces}

\author{A. Bespalov\thanks{School of Mathematics, University of Birmingham,
  Edgbaston, Birmingham B15 2TT, UK ({\tt a.bespalov@bham.ac.uk}).}
  \and
  S. Nicaise\thanks{Laboratoire de Math\'ematiques et ses Applications de Valenciennes, FR CNRS 2956,
  Institut des Sciences et Techniques de Valenciennes,
  Universit\'e de Valenciennes et du Hainaut-Cambr\'esis,
  Le Mont Houy, 59313
  Valenciennes Cedex 9, France ({\tt serge.nicaise@univ-valenciennes.fr}).}
}

\begin{document}

\date{}
\maketitle

\begin{abstract}
We consider the variational formulation
of the electric field integral equation on a Lipschitz polyhedral surface $\G$.
\rev{We study} the Galerkin boundary element \rev{discretisations}
\rev{based on the lowest-order Raviart-Thomas surface elements on a sequence of
anisotropic meshes algebraically graded towards} the edges of $\G$.
We establish quasi-optimal convergence of Galerkin solutions
\rev{under a mild restriction on the strength of grading}.
The key ingredient of our convergence analysis
are new \rev{componentwise} stability properties
of the Raviart-Thomas \rev{interpolant} 
on anisotropic elements.
\end{abstract}

\bigskip
\noindent
{\em Key words}:
     electromagnetic scattering, electric field integral equation,
     Galerkin discretisation, boundary element method,
     Raviart-Thomas interpolation, anisotropic elements, graded mesh

\noindent
{\em AMS Subject Classification}:
     65N38, 65N12, 78M15


\section{Introduction} \label{sec_intro}
\setcounter{equation}{0}

In this paper, we study the Galerkin boundary element method (BEM) 
on graded meshes
for numerical solution of the electric field integral equation (EFIE)
on a Lipschitz polyhedral surface $\G$ in~${\field{R}}^{3}$
(i.e., $\Gamma=\partial\Omega$, where $\Omega\subset{\field{R}}^{3}$ is a Lipschitz polyhedron).
The EFIE models the scattering of time-harmonic electromagnetic waves at a perfect conductor,
and the Galerkin BEM is widely used in engineering practice for simulation of this physical phenomenon.

The Galerkin BEM considered in this paper employs $\divg$-conforming lowest-order
Raviart-Thomas surface elements to discretise the variational formulation of the EFIE
(known as Rumsey's principle).
This approach is referred to as the natural BEM for the EFIE
(there exist other approaches, e.g., based on a stable mixed reformulation of Rumsey's principle,
see~\cite{BuffaCS_02_BEM}).
Non-coercivity of the bilinear form in Rumsey's principle
(due to the infinite-dimensional kernel of $\divg$, cf.~(\ref{Rumseys}))
significantly complicates the convergence analysis of Galerkin schemes.
This problem can be overcome by using appropriate decompositions of vector fields
in order to isolate the kernel of $\divg$
(we refer to discussion in~\cite[Section~3]{BuffaH_03_GBE},
to an abstract theory in~\cite{BuffaC_03_EFI, Buffa_05_RDS}, and
we outline available techniques for constructing such decompositions
in Section~\ref{sec_decomp}).
These ideas \rev{have led} to major advances in the convergence analysis and a priori error analysis
of the BEM for the EFIE on (open and closed) Lipschitz surfaces,
see~\cite{HiptmairS_02_NBE, BuffaCS_02_BEM, BuffaC_03_EFI, BuffaHvPS_03_BEM, BuffaH_03_GBE}
for the $h$-version of the BEM
and~\cite{BespalovH_10_NpB, BespalovHH_10_Chp, BespalovH_10_hpA, BespalovH_12_Nhp}
for high-order methods ($p$- and $hp$-BEM).
All these results, however, assume shape-regularity of the underlying meshes on $\G$.

It is well-known that convergence rates of the $h$-BEM with quasi-uniform and shape-regular meshes
are bounded by the poor regularity of solutions to the EFIE on non-smooth surfaces.
For example, on a closed polyhedral surface $\G = \partial\Omega$,
the solution may be only $\bH^{\eps}(\G)$-regular
(with a small $\eps>0$ in the case of non-convex polyhedron $\Omega$,
cf.~\cite[Section~4.4.2]{CostabelD_00_SEF}),
and convergence rate of the $h$-BEM is only $\frac 12 + \eps$ in this case,
\rev{whereas in the case of smooth solutions the lowest-order $h$-BEM converges with the optimal rate of $\frac 32$}
(see~\cite[Theorem~8.2]{HiptmairS_02_NBE} and~\cite[Theorem~2.2]{BespalovH_10_hpA}).
Taking the cue from the $h$-BEM results for the Laplacian (see~\cite{vonPetersdorffS_90_RMB,vP}),
\rev{we expect that} an optimal convergence rate of the $h$-BEM
\rev{for the EFIE can be recovered on the non-smooth surface $\G$, if one employs}
the meshes that are appropriately graded towards the edges of~$\G$.
These meshes contain highly anisotropic elements along the edges \rev{of~$\G$,
and none of the results mentioned above is applicable in this case.
Moreover, to the best of our knowledge, the quasi-optimality of the Galerkin $h$-BEM with graded meshes
for the EFIE has not been studied in the literature,}
and with this paper we fill this theoretical gap.

In the next section, we introduce necessary notation and formulate the EFIE in its variational form.
In Section~\ref{sec_bem}, we construct graded meshes on $\G$,
introduce the boundary element space, and formulate the main result of the paper---
Theorem~\ref{thm_converge}---that establishes quasi-optimal convergence of Galerkin solutions on graded meshes.
\rev{The proof of Theorem~\ref{thm_converge} follows the approach suggested
in~\cite{BuffaC_03_EFI, BuffaHvPS_03_BEM}, summarised in~\cite[Section~9.1]{BuffaH_03_GBE},
and extended to a general class of operators in~\cite[Section~3]{Buffa_05_RDS}.
At the heart of this approach is the decomposition technique described in Section~\ref{sec_decomp}.}
Section~\ref{sec_rt_stab} is instrumental in the construction of the corresponding discrete decomposition:
here we establish new stability properties of the Raviart-Thomas interpolant \rev{of} low-regular
vector fields on anisotropic elements.
In Section~\ref{sec_discrete_decomp}, we introduce the discrete decomposition
and \rev{complete the proof of} Theorem~\ref{thm_converge}.
An essential ingredient here is the projection operator $\CQ_h$
with enhanced approximation properties (see Proposition~\ref{prop_Qh-projector}).
\rev{The proof of Proposition~\ref{prop_Qh-projector} is given in Section~\ref{sec_appendix}.}

\renewcommand{\tG}{F}

\section{The electric field integral equation} \label{sec_efie}
\setcounter{equation}{0}

The variational formulation of the EFIE is posed on the Hilbert space
\[
  \bX = \bH^{-1/2}(\divg,\G) :=
        \{\bu \in \bH^{-1/2}_{\|}(\G);\; \divg\,\bu \in H^{-1/2}(\G)\}.
\]
Here, $\divg$ denotes the surface divergence operator,
$\bH^{-1/2}_{\|}(\G)$ is the dual space of $\bH^{1/2}_{\|}(\G)$
(the tangential trace space of $\bH^1(\Omega)$ on $\G$, see~\cite{BuffaC_01_TFI, BuffaCS_02_THL}),
and $H^{-1/2}(\G)$ is the dual space of $H^{1/2}(\G)$.
The space $\bX$ is equipped with its graph norm $\|\cdot\|_{\bX}$.
We refer to~\cite{BuffaC_01_TFI, BuffaC_01_TII, BuffaCS_02_THL, BuffaH_03_GBE}
for definitions and properties of $\bH^{-1/2}(\divg,\Gamma)$ and other involved trace spaces.
We also recall from \cite{BuffaC_01_TFI, BuffaCS_02_THL} that $\bX$ is the natural tangential trace space of
$\bH(\bcurl,\Omega)$.

In the present article, we use the same notation as in~\cite{BespalovH_10_hpA},
where we recalled definitions of the full range of Sobolev spaces and differential operators
needed for convergence analysis of the BEM for the EFIE (see Section~{3.1} therein).
In particular, we use a traditional notation for the Sobolev spaces (of scalar functions)
$H^s$, $\tH^s$ ($s \in [-1,1]$), $H^s_0$ ($s \in (0,1]$)
and their norms on Lipschitz domains and surfaces (see~\cite{LionsMagenes, McLean_00_SES}).
The norm and inner product in $L^2(D) = H^0(D)$ on a domain or surface~$D$ will be denoted by
$\|\cdot\|_{0,D}$ and $(\cdot,\cdot)_{0,D}$, respectively.
The notation $(\cdot,\cdot)_{0,D}$ will be used also for appropriate
duality pairings extending the $L^2(D)$-pairing for functions on~$D$.

For vector fields we will use boldface symbols (e.g., $\bu = (u_1,u_2)$), and
the spaces (or sets) of vector fields are also denoted in boldface
(e.g., $\bH^s(D) = (H^s(D))^2$ with $D \subset {\field{R}}^2$).
The norms and inner products in these spaces are defined componentwise.
The notation for the Sobolev spaces of tangential vector fields on $\G$
follows~\cite{BuffaC_01_TFI, BuffaC_01_TII, BuffaCS_02_THL}.
In particular, $\bL^2_{\rm t}(\G)$ denotes the space of two-dimensional, tangential,
square integrable vector fields on $\G$. The norm and inner product in this space
will be denoted by $\|\cdot\|_{0,\G}$ and $(\cdot,\cdot)_{0,\G}$, respectively,
and we will also use $(\cdot,\cdot)_{0,\G}$ for appropriate
duality pairings extending the $\bL^2_{\rm t}(\G)$-pairing for tangential vector fields on $\G$.
The similarity of this notation with the one for scalar functions
should not lead to any confusion, as the meaning will always be clear from the context .

For a fixed wave number $k>0$ and for a given source functional $\boldf \in \bX'$,
the variational formulation for the EFIE reads as:
{\em find a complex tangential field $\bu\in\bX$ such that}
\be \label{Rumseys}
    a(\bu,\bv) := \<\Psi_k\divg\,\bu,\divg\,\bv\> - k^2 \<\bPsi_k\bu, \bv\> =
    \<\boldf, \bv\> \quad\forall \bv \in \bX.
\ee
Here, $\Psi_k$ (resp., $\bPsi_k$) denote
the scalar (resp., vectorial) single layer boundary integral operator on $\Gamma$
for the Helmholtz operator $-\Delta - k^{2}$, see~\cite[Section~4.1]{BuffaCS_02_BEM}
(resp.,~\cite[Section~5]{BuffaH_03_GBE}).

To ensure the uniqueness of the solution to (\ref{Rumseys})
we always assume that $k^2$ is not an electrical eigenvalue of the interior problem in~$\Omega$.

\renewcommand{\hat}{\widehat}
\newcommand{\RT}{\bCR\bCT}

\section{Galerkin BEM on graded meshes. \rev{The main result}.} \label{sec_bem}
\setcounter{equation}{0}

For approximate solution of (\ref{Rumseys}) we apply \rev{the} natural BEM
based on Galerkin discretisations with lowest-order Raviart-Thomas spaces
on graded meshes.

First, let us describe the construction of graded meshes on individual faces of $\G$.
Here, we follow~\cite[Section~3]{vonPetersdorffS_90_RMB}.
For simplicity, we can assume that all faces of $\G$ are triangles.
On general polygonal faces the construction is similar, or one can first
subdivide the polygon into triangles.
On a triangular face $\tG \subset \G$, we first draw three lines through
the centroid and parallel to the sides of $\tG$.
This makes $\tG$ divided into three parallelograms and three triangles (see Figure~\ref{fig_1}).
Each of the three parallelograms can be mapped onto the unit square $\widehat Q = (0,1)^2$
by a linear transformation such that the vertex $(0,0)$ of $\hat Q$ is the image of a vertex of $\tG$.
Analogously, each of the three sub-triangles can be mapped onto the unit triangle
$\widehat T = \{\bx = (x_1,x_2);\; 0 < x_1 < 1,\ 0 < x_2 < x_1\} \subset \hat Q$
such that the vertex~$(1,1)$ of~$\hat T$ is the image of the centroid of~$\tG$.
Next, the graded mesh on $\hat Q$ (and hence on~$\hat T$) is generated by the lines
\[
  x_1 = \left(\frac iN\right)^\beta,\quad
  x_2 = \left(\frac jN\right)^\beta,\quad
  i,j = 0,1,\ldots,N.
\]
Here, $\beta \ge 1$ is the grading parameter, and $N \ge 1$ corresponds to the level of refinement.
Mapping each cell of these meshes back onto the face $\tG$, we obtain a graded mesh of triangles
and parallelograms on $\tG$ (see Figure~\ref{fig_1}).
Note that the diameter of the largest element of this mesh is proportional to $\beta N^{-1}$.
Hence, $h = 1/N$ defines the mesh parameter, and we will denote by
$\CT = \{\Delta_h^{\beta}\}$ a family of graded meshes
$\Delta_h^{\beta} = \{K;\; \cup \bar K = \bar\G\}$
generated on $\G$ by following the procedure described above.

\begin{figure}[!htb]
\begin{center}
\includegraphics[width=1\textwidth]{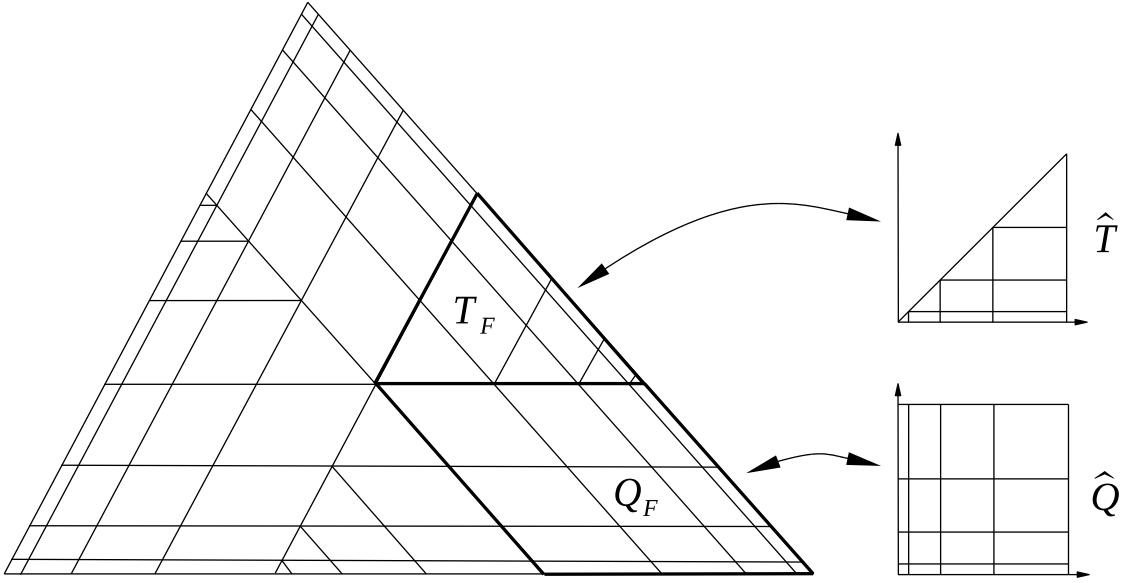}
\addtolength{\abovecaptionskip}{-2em}
\end{center}
\caption{Graded mesh on the triangular face $\tG \subset \G$.
The triangular (resp., parallelogram) block of elements $T_F$ (resp.,~$Q_F$)
is the image of the graded mesh on the unit triangle $\hat T$ (resp., the unit square~$\hat Q$).}
\label{fig_1}
\addtolength{\abovecaptionskip}{1em}
\end{figure}

Let us now introduce the boundary element space $\bX_h$.
It is known that Raviart-Thomas surface elements provide an affine equivalent family of
$\divg$-conforming finite elements under the Piola transformation,
see~\cite[Section~III.3]{BrezziF_91_MHF}.
We will write $\RT_0(K)$ for the local lowest-order Raviart-Thomas space
on a generic (triangular or quadrilateral) element $K$, and we denote by
$\bX_h = \RT_0(\Delta_h^\beta)$ the corresponding space
of $\divg$-conforming boundary elements over the graded mesh $\Delta_h^\beta$.

The following theorem states the unique solvability and quasi-optimal convergence
of the Galerkin BEM on graded meshes for the EFIE.

\begin{theorem} \label{thm_converge}
There exists $h_0 < 1$ such that for any $\boldf \in \bX'$ and for any
graded mesh $\Delta_h^\beta$ with $h \le h_0$ and $\beta \in [1,3)$,
the Galerkin boundary element discretisation of {\rm (\ref{Rumseys})}
admits a unique solution $\bu_h \in \bX_h$ and the $h$-version of
the Galerkin BEM on graded meshes $\Delta_h^\beta$ converges quasi-optimally, i.e.,
\be \label{quasi-optimal}
    \|\bu - \bu_h\|_{\bX} \le C \inf_{\bv \in \bX_h} \|\bu - \bv\|_{\bX},
\ee
where the constant $C$ may depend only on the geometry of $\G$ and
the grading parameter~$\beta$.
\end{theorem}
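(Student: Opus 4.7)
The plan is to apply the abstract framework of Buffa~\cite{Buffa_05_RDS} (cf.\ also~\cite{BuffaC_03_EFI, BuffaHvPS_03_BEM} and~\cite[Section~9.1]{BuffaH_03_GBE}), which reduces quasi-optimality of the Galerkin scheme for the non-coercive EFIE to three ingredients: (i)~a Helmholtz-type splitting $\bX = \bV \oplus \bW$ of the continuous space in which the bilinear form $a(\cdot,\cdot)$ in~(\ref{Rumseys}) decomposes into a coercive principal part on $\bW$ plus a compact perturbation; (ii)~a compatible discrete splitting $\bX_h = \bV_h \oplus \bW_h$; and (iii)~a Fortin-type quasi-interpolant $\Pi_h : \bX \to \bX_h$ that is uniformly bounded in $\|\cdot\|_{\bX}$ and converges pointwise to the identity on $\bX$. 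Once these are established, one obtains a discrete G{\aa}rding-type inequality for $a$ on $\bX_h$ for all $h \le h_0$, unique solvability by a compactness argument, and the quasi-optimal estimate (\ref{quasi-optimal}) via the standard C\'ea-type bound for non-coercive problems.

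For ingredient~(i) I would invoke the continuous decomposition summarised in Section~\ref{sec_decomp}, where $\bV$ absorbs the infinite-dimensional kernel of $\divg$ and $\bW$ is a more regular complement on which $\divg$ is an isomorphism onto its range. The discrete splitting~(ii) would be constructed from the discrete exact sequence on $\Delta_h^\beta$ (scalar Lagrange nodal elements, lowest-order Raviart-Thomas elements, piecewise constants). The heart of the matter is~(iii): on anisotropic graded meshes, the classical Raviart-Thomas interpolant is not uniformly bounded on low-regularity tangential fields, so no canonical commuting diagram is directly available. To bypass this, I would use the componentwise anisotropic stability estimates of Section~\ref{sec_rt_stab} to control the vector part by exploiting the tensor-product structure of the elongated elements, and combine them with the scalar projector $\CQ_h$ of Proposition~\ref{prop_Qh-projector} for the potential component. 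A Fortin-type composition of these two operators should then yield a uniformly bounded quasi-interpolant on $\bX$ that intertwines the continuous and discrete splittings modulo terms that go to zero with $h$.

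The restriction $\beta \in [1,3)$ is expected to emerge from the componentwise Raviart-Thomas stability on anisotropic elements: strong grading inflates the aspect ratio of the elements near the edges of $\G$, and the Sobolev-type estimates underlying the anisotropic interpolation bounds remain valid only while the relevant integrability exponents stay in admissible ranges, which ceases to hold as $\beta \to 3$. The main obstacle in executing this plan is that the norm on $\bX = \bH^{-1/2}(\divg,\G)$ is fractional and non-local, so local interpolation error estimates on individual anisotropic elements do not assemble into a global stability bound by routine means. This is precisely why the enhanced approximation properties of $\CQ_h$ (whose technical proof is deferred to Section~\ref{sec_appendix}) are indispensable: a plain $L^2$-orthogonal projection would produce constants depending on the mesh aspect ratio and could not deliver a bound uniform over the family $\{\Delta_h^\beta\}$ within the required range of $\beta$.
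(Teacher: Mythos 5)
Your high-level framework is the right one---the abstract theory of~\cite{Buffa_05_RDS} (also~\cite{BuffaC_03_EFI, BuffaHvPS_03_BEM, BuffaH_03_GBE}) requiring a stable continuous decomposition, a compatible discrete decomposition, and a gap/approximation property---and you correctly point to Section~\ref{sec_rt_stab} and Proposition~\ref{prop_Qh-projector} as the key technical inputs. But two substantive choices in your plan would make it fail.

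First, you base the splitting on a Helmholtz/Hodge-type decomposition and, moreover, place the infinite-dimensional kernel of $\divg$ into~$\bV$. That is the opposite of the paper's split, and more importantly it is precisely the route the paper explicitly declines for graded meshes: on a non-smooth polyhedron the surface-gradient component of the Hodge splitting has poor regularity, which would force a much more restrictive bound on~$\beta$ than $\beta<3$. The paper instead uses the regularising projection~$\sfR$ built from $\bH^{1}(\Omega)$-vector potentials (Lemma~\ref{lm_L-map}), so that $\bV=\sfR(\bX)\subset\bH^{1/2}_{\perp}(\G)$ is the \emph{regular} component and $\bW=({\rm Id}-\sfR)(\bX)$ is the $\divg$-free one; the enhanced $\bH^{1/2}_{\perp}$-regularity of~$\bV$ is exactly what the gap property~(\ref{gap}) feeds on. Relatedly, $\CQ_h$ is not a scalar projector for a ``potential component'': it maps $\bH^{s}_{-}(\G)\cap\bH(\divg,\G)$ into the vector-valued space~$\bX_h$, intertwines divergences via~(\ref{Qh_commut}), and is applied to the whole field $\sfR\bu_h$. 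The discrete splitting is $\bV_h=(\CQ_h\circ\sfR)\bX_h$ and $\bW_h=({\rm Id}-\CQ_h\circ\sfR)\bX_h$, not something extracted from a discrete exact sequence of Lagrange/Raviart-Thomas/piecewise-constant spaces.

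Second, and decisively for the range $\beta\in[1,3)$: you do not identify what widens the admissible range from $\beta<2$ to $\beta<3$. The anisotropic Raviart-Thomas estimates of Section~\ref{sec_rt_stab} alone give an interpolation error of order $h^{1-\beta/2}$ in $\bL^2_{\rm t}(\G)$ (Lemma~\ref{lem_RT-error}), i.e.\ they would only yield quasi-optimality for $\beta<2$. The extra half power of~$h$ comes from the duality argument built into~$\CQ_h$: estimate~(\ref{Qh-error}) measures $\bu-\CQ_h\bu$ in the weak dual norm $\|\cdot\|_{\bH^{-1/2}_{-}(\G)}$ and gains an additional factor $h^{1/2-\eps}$, producing $h^{3/2-\beta/2-\eps}$ in Corollary~\ref{cor_Qh-projector} and hence the gap property for all $\beta<3$. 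Explaining the restriction by ``integrability exponents ceasing to be admissible'' misses this dual-norm gain, without which the plan as written would not cover the claimed range of~$\beta$.
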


The proof of Theorem~\ref{thm_converge} relies on an abstract theory
\rev{for analysing convergence of}
Galerkin discretisations \rev{for} non-coercive variational problems like (\ref{Rumseys}).
This theory was developed in~\cite{BuffaC_03_EFI}, \cite{BuffaHvPS_03_BEM},
\rev{\cite[Section~9.1]{BuffaH_03_GBE}, and in}~\cite[Section~3]{Buffa_05_RDS}.
In particular, it follows from the latter article that in order to prove
Theorem~\ref{thm_converge} we need to establish the following properties:
\begin{itemize}
\item[\textbf{(A)}]
the existence of a stable direct decomposition $\bX=\bV\oplus\bW$ such that
$a|_{\bV \times \bV}$ and $\rev{-}a|_{\bW \times \bW}$ are both $\bX$-coercive, and
$a|_{\bV \times \bW}$ and $a|_{\bW \times \bV}$ are both compact;
\item[\textbf{(B)}]
the existence of the corresponding discrete decomposition $\bX_{h} = \bV_{h} + \bW_{h}$,
$\bW_{h} \subset \bW$, that is uniformly stable with respect to the mesh parameter $h$;
\item[\textbf{(C)}]
the gap property
\be \label{gap}
    \sup\limits_{\bv_{h} \in \bV_{h}}  \inf\limits_{\bv \in \bV}
    \frac{\|\bv-\bv_{h}\|_{\bX}}{\|\bv_{h}\|_{X}} \leq \eps(h)\quad
    \hbox{with \ $\eps(h) \to 0$ \ as \ $h \to 0$.}
\ee
\end{itemize}
We will prove Theorem~\ref{thm_converge} by verifying these properties
in Sections~\ref{sec_decomp} and~\ref{sec_discrete_decomp} below.

\begin{remark} \label{rem_open_surface_1}
Theorem~{\rm \ref{thm_converge}} remains valid if $\G$ 
is a piecewise plane orientable open surface
(see~{\rm \cite{BuffaC_03_EFI}} for the problem formulation and the underlying tangential trace spaces
in this case). The proof repeats the arguments
in Sections~{\rm \ref{sec_decomp}} and~{\rm \ref{sec_discrete_decomp}} below
by using a specific construction of the decomposition $\bX = \bV \oplus \bW$
as described in~{\rm \cite[Section~3]{BespalovHH_10_Chp}}.
\end{remark}

\begin{remark} \label{rem_h-rate}
\rev{If some information about the regularity of the solution $\bu$ to {\rm (\ref{Rumseys})} is available,
then convergence result of Theorem~{\rm \ref{thm_converge}} translates into an a priori error estimate
in the natural $\bX$-norm.
For scattering problems with sufficiently smooth source functional $\boldf$
(e.g., with $\boldf$ representing the excitation by an incident plane wave),
the regularity of the solution depends only on the geometry of $\G$.
In particular, nonsmoothness of $\G$ leads to singularities in the solution of the EFIE,
severely affecting convergence rates of the $h$-BEM on shape-regular meshes.
However, similar to the case of the Laplacian in~{\rm \cite{vonPetersdorffS_90_RMB,vP}},
by employing the graded meshes with sufficiently large grading parameter $\beta$
(depending on the strength of singularities in $\bu$)
one may hope to recover
the optimal convergence rate (i.e., the rate of the $h$-BEM on quasi-uniform meshes
in the case of a smooth solution).
The main question here is whether the restriction on the grading parameter $\beta$
that guarantees quasi-optimality of the Galerkin BEM in Theorem~{\rm \ref{thm_converge}},
is sufficient for recovering this optimal convergence.
We will address this issue in the forthcoming article~{\rm \cite{BespalovN_AEA}}.
}
\end{remark}

Throughout the paper, $C$, $C_1$, {\em etc.} denote generic positive constants
that are independent of the mesh parameter $h$ and involved functions but
may depend on the geometry of $\G$ and the grading parameter~$\beta$.
We will also write
$a \lesssim b$
and $A \simeq B$, which means the existence
of generic positive constants $C$, $C_1$, $C_2$ such that
$a \le C b$ and $C_1 B \le A \le C_2 B$, respectively.

\section{Decomposition technique} \label{sec_decomp}
\setcounter{equation}{0}

Let us address property {\bf (A)} in Section~\ref{sec_bem}.
One way to obtain a suitable decomposition is to employ the $\bL^2_{\rm t}(\G)$-orthogonal
Hodge decomposition of $\bX$, cf.~\cite{BuffaC_01_TII}.
In the context of the $h$-BEM on shape-regular meshes, this idea was successfully exploited
in~\cite{BuffaCS_02_BEM, HiptmairS_02_NBE, BuffaHvPS_03_BEM, BuffaC_03_EFI}.
However, in the case of non-smooth surfaces, the regularity of surface gradients
in the $\bV$-component of the decomposition may be poor, and this causes substantial
technical difficulties in the analysis of the $p$- and $hp$-BEM.
For the $p$-BEM on plane open screens, a modification of the above strategy was suggested
in~\cite{BespalovH_10_NpB}, where we consistently used the $\tilde\bH^{-1/2}$-inner product
and proved $\tilde\bH^{-1/2}$-orthogonality of the Hodge decomposition.
Unfortunately, these ideas do not generalise immediately to closed polyhedral surfaces
or piecewise plane open screens, neither to the $hp$-BEM with quasi-uniform meshes,
cf.~\cite{BespalovH_Chp}. When attempting to use the Hodge decomposition for convergence
analysis of the $h$-BEM on graded meshes, poor regularity of vector fields in the $\bV$-component
leads to severe restrictions on the grading parameter $\beta$.

An alternative technique employs a regularising projection
$\sfR: \bX \to \bX$ to construct a decomposition of $\bX$
with enhanced regularity of the $\bV$-component
(see~\cite{Hiptmair_03_CFE}, \cite[Section~3]{BuffaH_03_GBE}, and~\cite[Section~4.3.1]{Buffa_05_RDS}).
The projection $\sfR$ is defined by employing the $\bH^1(\Omega)$-regular vector potentials
from the following lemma.

\begin{lemma} \label{lm_L-map}
  {\rm \cite[Section~3]{AmroucheBDG_98_VPT}}
  For any bounded Lipschitz domain $\Omega \subset {\field{R}}^{3}$ there exists a continuous mapping
  $\sfL:\bcurl\, \bH(\bcurl,\Omega) \to \bH^{1}(\Omega)$
  such that 
  $\bcurl \sfL \bPhi = \bPhi$ for all $\bPhi \in \bcurl\, \bH(\bcurl,\Omega)$.
\end{lemma}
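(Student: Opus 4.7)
The plan is to reduce to the whole-space case: extend $\bPhi$ to a compactly supported, divergence-free field on ${\field{R}}^{3}$, invert $\bcurl$ there via the Newton-potential (Biot--Savart) formula---which automatically produces one extra derivative---and then restrict the resulting vector potential to $\Omega$.

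To build $\sfL\bPhi$ I would proceed in three steps. First, I write $\bPhi = \bcurl\,\bu$ with $\bu \in \bH(\bcurl,\Omega)$ chosen continuously in $\bPhi$: the bounded operator
\[
\bcurl \colon \bH(\bcurl,\Omega)/\ker(\bcurl) \;\longrightarrow\; \bcurl\,\bH(\bcurl,\Omega) \subset \bL^{2}(\Omega)
\]
is a bijection between Hilbert spaces whose range is closed by the $\bL^{2}(\Omega)$-Helmholtz decomposition on a bounded Lipschitz domain (the harmonic Neumann forms being a finite-dimensional obstruction), so the open mapping theorem provides a bounded right inverse $\bPhi \mapsto \bu$ with $\|\bu\|_{\bH(\bcurl,\Omega)} \lesssim \|\bPhi\|_{\bL^{2}(\Omega)}$. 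Second, I extend $\bu$ to $\widetilde{\bu} \in \bH(\bcurl,{\field{R}}^{3})$ compactly supported in a fixed ball $B \supset \overline{\Omega}$ via a standard $\bH(\bcurl)$-extension operator; the field $\widetilde{\bPhi} := \bcurl\,\widetilde{\bu}$ is then a compactly supported, divergence-free extension of $\bPhi$ (using $\div\,\bcurl \equiv 0$), with $\|\widetilde{\bPhi}\|_{\bL^{2}({\field{R}}^{3})} \lesssim \|\bPhi\|_{\bL^{2}(\Omega)}$. Third, I set $\bw := \mathcal{N}\ast\widetilde{\bPhi}$ with Newton kernel $\mathcal{N}(x) = 1/(4\pi|x|)$: interior elliptic regularity for $-\Delta\bw = \widetilde{\bPhi}$ gives $\bw \in \bH^{2}_{\rm loc}({\field{R}}^{3})$, while $\div\,\widetilde{\bPhi} = 0$ together with the decay of $\bw$ at infinity forces $\div\,\bw \equiv 0$ on ${\field{R}}^{3}$. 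Defining $\sfL\bPhi := (\bcurl\,\bw)\big|_{\Omega} \in \bH^{1}(\Omega)$, one verifies
\[
\bcurl\,\sfL\bPhi \;=\; \bigl(-\Delta\bw + \grad\,\div\,\bw\bigr)\big|_{\Omega} \;=\; \widetilde{\bPhi}\big|_{\Omega} \;=\; \bPhi,
\]
and chaining the continuous steps above yields $\|\sfL\bPhi\|_{\bH^{1}(\Omega)} \lesssim \|\bPhi\|_{\bL^{2}(\Omega)}$.

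I expect the main obstacle to be the very first step---continuous selection of the preimage $\bu$. This is the only place where the Lipschitz regularity and the topology of $\Omega$ genuinely enter, through closedness of $\bcurl\,\bH(\bcurl,\Omega)$ in $\bL^{2}(\Omega)$ and the structure of harmonic Neumann fields. The extension and Newton-potential steps that follow are standard, and the $\bH^{1}$-regularity of $\sfL\bPhi$ comes out as a purely local elliptic-regularity gain from the smoothing of $\mathcal{N}\ast$ composed with one derivative.
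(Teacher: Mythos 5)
Your construction is correct, and it follows the same high-level plan as the cited reference \cite[Section~3]{AmroucheBDG_98_VPT}: extend to the whole space, invert $\bcurl$ by the Newton potential / Biot--Savart formula (which buys the extra derivative), and restrict to $\Omega$. Where you deviate, and in a genuinely useful way, is in the extension step. Amrouche--Bernardi--Dauge--Girault extend the divergence-free field itself in a divergence-preserving fashion, which forces them to impose and verify flux (cohomological) side conditions and to take the multiple-connectivity of $\Omega$ and $\partial\Omega$ into account. You instead pick a curl-preimage $\bu \in \bH(\bcurl,\Omega)$ of $\bPhi$ and extend $\bu$ in $\bH(\bcurl)$; then $\widetilde{\bPhi} = \bcurl\,\widetilde{\bu}$ is \emph{automatically} globally divergence-free and compactly supported, so the topological bookkeeping disappears entirely. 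That is a clean simplification and worth keeping in mind.

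Two remarks on the inputs you treat as routine. First, your closing claim that the continuous selection of the preimage $\bu$ is ``the only place where the Lipschitz regularity $\ldots$ genuinely enter[s]'' undersells the extension step: a bounded extension operator $\bH(\bcurl,\Omega) \to \bH(\bcurl,{\field{R}}^{3})$ for a general Lipschitz domain is itself a nontrivial result (it is \emph{not} the classical Calder\'on--Stein Sobolev extension, since $\bH(\bcurl)$ has only partial regularity). It exists, but it deserves an explicit citation rather than the label ``standard''. Second, the closedness of $\bcurl\,\bH(\bcurl,\Omega)$ in $\bL^{2}(\Omega)$ is better attributed to a Poincar\'e--Friedrichs inequality for $\bcurl$ (via the Weck/Picard compact embedding of $\bH(\bcurl)\cap\bH(\div)$ into $\bL^{2}$ on Lipschitz domains) than to ``the harmonic Neumann forms being a finite-dimensional obstruction'', though these are of course two sides of the same Hodge-theoretic coin. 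With those two citations pinned down, the open-mapping argument, the identity $\bcurl\,\bcurl\,\bw = -\Delta\bw + \grad\,\div\,\bw = \widetilde{\bPhi}$, and the chain of operator-norm bounds $\|\sfL\bPhi\|_{\bH^{1}(\Omega)} \lesssim \|\bPhi\|_{\bL^{2}(\Omega)}$ are all sound.
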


Let $\bu \in \bX$.
The regularising projection $\sfR: \bX \mapsto \bX$ is defined as follows:
\[
  \sfR\bu := \big((\sfL \bPhi) \times \bnu\big)|_{\G},
\]
where $\sfL$ is from Lemma~\ref{lm_L-map},
$\bnu$ denotes the unit outward normal to $\Omega$,
$\bPhi := \grad w$, and
$w \in H^1(\Omega)$ is the solution to the problem
\beas
     -\Delta w = 0 & & \hbox{in $\Omega$},
     \\[2pt]
     \grad w \cdot \bnu = \divg \bu & & \hbox{on $\G$}.
\eeas
The fact that $\int_{\Sigma} \divg\bu\, dS = 0$ for each connected component $\Sigma$ of $\G$
guarantees $\bPhi \in \bcurl\, \bH(\bcurl, \Omega)$, and Lemma~\ref{lm_L-map} can be applied.
By using elliptic lifting theorems, trace theorems, and the continuity of $\sfL$, we conclude:
\be \label{R-bound}
    \exists C = C(\G) > 0\ \ \hbox{such that}\ \
    \|\sfR \bu\|_{\bH^{1/2}_{\perp}(\G)} \le
    C\, \|\divg\bu\|_{H^{-1/2}(\G)}\ \ 
    \forall\,\bu \in \bX,
\ee
where $\bH^{1/2}_{\perp}(\G) \subset \bX$ is the rotated tangential trace space
of $\bH^1(\Omega)$ on $\G = \partial\Omega$, see~\cite{BuffaC_01_TFI, BuffaCS_02_THL}.

By construction of $\sfR$, we have on $\G$
\be \label{div-property}
    \divg \sfR \bu = \divg \bu\ \ \hbox{for any $\bu \in \bX$},
\ee
that is $\sfR^2 = \sfR$.

Now we can define the decomposition
\be \label{decomp}
    \bX = \bV \oplus \bW
    \quad \hbox{with}\quad
    \bV := \sfR(\bX) \subset \bH^{1/2}_{\perp}(\G)\quad \hbox{and}\quad
    \bW := ({\rm Id} - \sfR)(\bX).
\ee
\rev{Decomposition (\ref{decomp}) was used in~\cite{BespalovHH_10_Chp}}
to prove unique solvability and quasi-optimal convergence of the $hp$-BEM with locally variable
polynomial degrees on shape-regular meshes.
As we will see \rev{in this paper}, the same \rev{decomposition} technique
can be used effectively in the analysis of the $h$-BEM on graded meshes.

By (\ref{div-property}) we conclude that $\bW$ comprises $\divg$-free vector fields.
Stability of decomposition (\ref{decomp}) follows from inequality (\ref{R-bound}) and
the continuous embedding $\bH^{1/2}_{\perp}(\G) \hookrightarrow \bX$.
Furthermore, the embedding $\bV \hookrightarrow \bL^2_{\rm t}(\G)$ is compact
by (\ref{R-bound}) and Rellich's theorem.
Thus, thanks to the $H^{-1/2}(\G)$-coercivity (resp., $\bH_{\perp}^{-1/2}(\G)$-coercivity)
of $\Psi_k$ (resp. $\bPsi_k$) (see~\cite[Lemmas~8, 7]{BuffaH_03_GBE}),
the $\bX$-coercivity of $a|_{\bV \times \bV}$ and $a|_{\bW \times \bW}$
is proved by the same arguments as in~\cite[proof of Theorem~3.4]{BuffaC_03_EFI}.
The compactness of $a|_{\bV \times \bW}$ and $a|_{\bW \times \bV}$
is due to the continuity of $\bPsi_k: \bH_{\perp}^{-1/2}(\G) \to \bH_{\perp}^{1/2}(\G)$
and the compactness of the embedding $\bV \hookrightarrow \bH_{\perp}^{-1/2}(\G)$
(see~\cite[Lemma~9]{BuffaH_03_GBE}).
This proves {\bf (A)}.

Before we can define the discrete counterpart of decomposition (\ref{decomp}),
we need to find a suitable projector onto the space of Raviart-Thomas
surface elements. Stability of the discrete decomposition will follow
from stability properties of the Raviart-Thomas interpolation on anisotropic elements,
which is the subject of the next section.

\newcommand{\hPi}{\hat\Pi}

\section{\rblf{Raviart-Thomas interpolation on anisotropic\\ elements}} \label{sec_rt_stab}
\setcounter{equation}{0}

\rblf{In this section, we establish new stability properties of the Raviart-Thomas interpolant
on anisotropic elements.
We will also prove the corresponding interpolation error estimates.}

\rev{In the context of the finite element method, the analysis of interpolation operators
on anisotropic elements can be found in~\cite{Apel_99_AFE, AcostaADL_11_EER}.
For the Raviart-Thomas interpolation (see, e.g.,~\cite[Section~3]{AcostaADL_11_EER}), the main idea is
to study componentwise stability of the interpolant on a reference element $\hat K$.
For sufficiently regular vector fields, this study relies on the fact
that the standard (scalar) trace operator is well defined for functions in $W^{1,p}(\hat K)$ for any $p>1$.
In our BEM application, however, the stability result is needed for low-regular vector
fields living in fractional Sobolev spaces $\bH^{s}(\hat K) \cap \bH(\div,\hat K)$ with $0 < s \le 1/2$.
For such vector fields, the trace of the normal component only exists in a weak sense.
Therefore, instead of the standard trace argument used in~\cite{AcostaADL_11_EER},
we use Green's formula~(\ref{serge9/05:2}) and consistently employ
the anisotropic seminorms defined below.
More precisely, for $s \in (0,1/2]$ we introduce the $H^{s}$-seminorms of anisotropic type.}
On the reference square $\hat Q=(0,1)^2$ these are defined as follows:
\begin{eqnarray*}
      |u|_{AH_1^{s}(\hat Q)}^2 =
      \int_0^1 |u(\cdot, x_2)|_{H^{s}(0,1)}^2\, dx_2,
      \\
      |u|_{AH_2^{s}(\hat Q)}^2 =
      \int_0^1 |u(x_1,\cdot)|_{H^{s}(0,1)}^2\, dx_1.
\end{eqnarray*}
\rblf{These definitions are meaningful} for all $u\in H^{s}(\hat Q)$
due to~\rblf{\cite[Theorem~10.2]{LionsMagenes}} \rblf{which} yields that
\be \label{serge9/05:4}
    \|u\|_{H^{s}(\hat Q)} \simeq \|u\|_{0,\hat Q} + |u|_{AH_1^{s}(\hat Q)} +
    |u|_{AH_2^{s}(\hat Q)}.
\ee
On the reference triangle
\rblf{$\hat T = \{(x_1, x_2);\; 0 < x_1 < 1,\ 0 < x_2 < x_1\}$,
the following seminorms}
\beas
     |u|_{AH_1^{s}(\hat T)}^2=
     \int_0^1 |u(\cdot, x_2)|_{H^{s}(x_2,1)}^2\, dx_2,\\
     |u|_{AH_2^{s}(\hat T)}^2=
     \int_0^1 |u(x_1,\cdot)|_{H^{s}(0,x_1)}^2\, dx_1
\eeas
\rblf{are also well defined} for all $u\in H^{s}(\hat T)$.
\rblf{Indeed,} by Theorem 1.4.3.1 of \cite{Grisvard_85_EPN}
there exists a continuous linear operator (called the extension operator)
$\sfE: H^{s}(\hat T) \to H^{s}(\hat Q)$ such that
\[
  \sfE u|_{\hat T}=u,\quad \forall u\in H^{s}(\hat T).
\]
Hence, applying (\ref{serge9/05:4}) to $\sfE u$, we have
\be \label{serge9/05:11}
    |u|_{AH_1^{s}(\hat T)}+
    |u|_{AH_2^{s}(\hat T)}\, \lesssim\, \|u\|_{H^{s}(\hat T)}.
\ee

We recall that the Raviart-Thomas interpolant
\rblf{$\Pi_{\rm RT}\bu$ is well-defined for any} $\bu \in \bH^{s}(K)$ ($s>0$)
such that $\div\,\bu \in L^{2}(K)$, where $K$ is any triangle or rectangle.
Indeed, for such {\rblf vector fields}
the following Green's formula has a meaning (\rblf{see, e.g.,~\cite[Lemma~2.1]{BespalovH_11_NHC}})
\be \label{serge9/05:2}
    (\bu, \nabla \varphi)_{0,K} + \int_K \div\,\bu\,\varphi =
    (\bu \cdot \bn, \varphi)_{0,\partial K},\quad
    \forall \varphi\in H^{1-\eps}(K),
\ee
with $\eps\in (0,s)$ \rblf{and $\bn$ denoting the outward unit normal to $\partial K$}.
Hence, taking $\varphi \in H^{1-\eps}(K)$ such that
$\varphi=1$ on \rblf{the edge $e \subset \partial K$ and $\varphi=0$ on $\partial K{\setminus}e$},
\rblf{we can} define
$(\bu\cdot \bn, 1)_{0,e} := (\bu \cdot \bn, \varphi)_{0,\partial K}$.

In what follows, we will denote by $\hPi_{\rm RT}$ the Raviart-Thomas interpolation
operator on the reference element $\hat K$ ($\hat K = \hat Q$ or $\hat T$).

\subsection{\rblf{The reference square}}

\rblf{On the reference square $\hat Q$,
we denote by $\hat e_1$ and $\hat e_3$ the edges parallel to the $x_1$-axis,
and by $\hat  e_2$ and $\hat  e_4$ the edges parallel to the $x_2$-axis (see Figure~\ref{fig_2}).}
We recall \rblf{from}~\cite{BrezziF_91_MHF} that the lowest order Raviart-Thomas
elements on $\hat Q$ are defined as
\[
  \RT_0(\hat Q) = \left\{(a + c x_1, b + d x_2)^\top;\; a,b,c,d\in {\field{R}}\right\},
\]
and that the associated degrees of freedom are given by
\[
  \int_{\hat e_i} \bu \cdot \bn\,ds,\quad i=1,2,3,4.
\]

\begin{figure}[!htb]
\begin{center}
\includegraphics[width=0.75\textwidth]{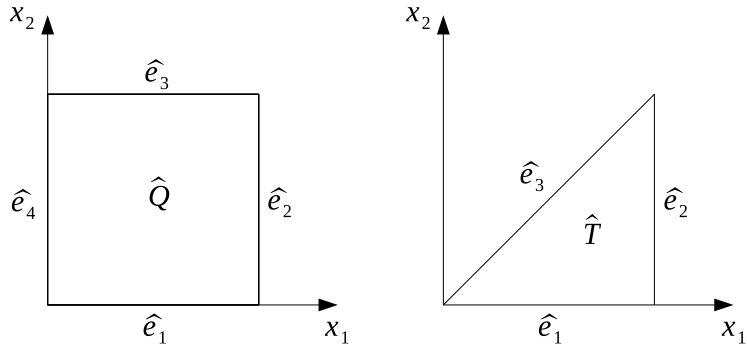}
\addtolength{\abovecaptionskip}{-2em}
\end{center}
\caption{The reference square $\hat Q$ and the reference triangle $\hat T$.}
\label{fig_2}
\addtolength{\abovecaptionskip}{1em}
\end{figure}

\begin{theorem} \label{thm_RT_square}
\rblf{{\rm (i)} For all $\bu \in \bH^{s}(\hat Q)$ with $s > 1/2$,
we have for $l=1,\,2$}
\be \label{serge9/05:1(i)}
    \rblf{\| (\hPi_{\rm RT} \bu)_l\|_{0,\hat Q}\lesssim
	  \|u_l\|_{H^{s}(\hat Q)}.}
\ee
\rblf{{\rm (ii)} For all  $\bu \in \bH^{s}(\hat Q) \cap \bH(\div,\hat Q)$ with $0 < s \le 1/2$,
we have for $l=1,\,2$}
\be \label{serge9/05:1}
    \| (\hPi_{\rm RT} \bu)_l\|_{0,\hat Q}\lesssim
	  \|u_l\|_{H^{s}(\hat Q)} + |u_{l+1}|_{AH_{l+1}^{s}(\hat Q)}
	  + \rblf{\|\div\,\bu\|_{0,\hat Q}}\ \ \hbox{\rm (mod. $2$).}
\ee
\end{theorem}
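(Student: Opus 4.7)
The plan is to reduce the componentwise bound on $\hPi_{\rm RT}\bu$ to estimates on the Raviart--Thomas degrees of freedom---the edge fluxes $(\bu\cdot\bn,1)_{0,\hat e_i}$---and then to handle these fluxes by the standard trace theorem in the regular case and by Green's formula~(\ref{serge9/05:2}) in the low-regularity case. Using the explicit form of $\RT_0(\hat Q)$ together with the definition of the degrees of freedom, one finds
\[
(\hPi_{\rm RT}\bu)_1(x_1,x_2) = (1-x_1)\int_0^1 u_1(0,y)\,dy + x_1 \int_0^1 u_1(1,y)\,dy,
\]
and a direct integration yields
$\|(\hPi_{\rm RT}\bu)_1\|_{0,\hat Q}^2 \lesssim |(\bu\cdot\bn,1)_{0,\hat e_2}|^2 + |(\bu\cdot\bn,1)_{0,\hat e_4}|^2$.
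The component $l=2$ is symmetric, so the task reduces to bounding a single edge flux of the form $|(\bu\cdot\bn,1)_{0,\hat e_2}|$.

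For part~(i), since $s>1/2$ the standard trace theorem applied to the scalar component gives $u_1(i,\cdot)\in H^{s-1/2}(0,1)\hookrightarrow L^2(0,1)$ with norm controlled by $\|u_1\|_{H^s(\hat Q)}$, and Cauchy--Schwarz in $x_2$ then produces~(\ref{serge9/05:1(i)}) immediately. For part~(ii), traces of $u_1$ on $\hat e_2$ no longer exist classically, so I would instead invoke~(\ref{serge9/05:2}) with a cut-off $\varphi\in H^{1-\eps}(\hat Q)$ (for some $\eps\in(0,s)$) satisfying $\varphi=1$ on $\hat e_2$ and $\varphi=0$ on $\hat e_1\cup\hat e_3\cup\hat e_4$; such a $\varphi$ exists because its boundary trace, having jumps at two vertices, is of class $H^{1/2-\eps}$. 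Green's formula then gives
\[
(\bu\cdot\bn,1)_{0,\hat e_2} = \int_{\hat Q} u_1\,\partial_{x_1}\varphi\,dx + \int_{\hat Q} u_2\,\partial_{x_2}\varphi\,dx + \int_{\hat Q} \varphi\,\div\,\bu\,dx,
\]
and the first and third integrals are bounded at once by $\|u_1\|_{H^s(\hat Q)}$ and $\|\div\,\bu\|_{0,\hat Q}$ via Cauchy--Schwarz.

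The heart of the argument, and the main obstacle, is controlling $\int_{\hat Q} u_2\,\partial_{x_2}\varphi\,dx$ by the anisotropic seminorm $|u_2|_{AH_2^s(\hat Q)}$ even though $u_2$ has only fractional smoothness in $x_2$. Here I would exploit the fact that $\varphi$ vanishes on \emph{both} horizontal edges, which forces
\[
\int_0^1 \partial_{x_2}\varphi(x_1,x_2)\,dx_2 = 0 \quad \hbox{for every fixed $x_1 \in (0,1)$.}
\]
This zero-mean property lets me subtract the partial mean $\bar u_2(x_1):=\int_0^1 u_2(x_1,y)\,dy$ inside the integral without changing its value, after which a one-dimensional fractional Poincar\'e--Wirtinger inequality, valid for any $s>0$, gives
\[
\|u_2(x_1,\cdot) - \bar u_2(x_1)\|_{0,(0,1)} \lesssim |u_2(x_1,\cdot)|_{H^s(0,1)}.
\]
Cauchy--Schwarz in $x_2$ followed by one in $x_1$ then produces the required bound
\[
\Bigl|\int_{\hat Q} u_2\,\partial_{x_2}\varphi\,dx\Bigr| \lesssim |u_2|_{AH_2^s(\hat Q)}\,\|\partial_{x_2}\varphi\|_{0,\hat Q}.
\]
The same reasoning with a cut-off supported near $\hat e_4$ closes the bound for $l=1$, and $l=2$ follows by interchanging the roles of $x_1$ and $x_2$, yielding~(\ref{serge9/05:1}).
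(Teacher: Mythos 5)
Your reduction to the edge fluxes, the trace argument for part (i), the choice of cut-off $\varphi\in H^{1-\eps}(\hat Q)$ and the use of Green's formula~(\ref{serge9/05:2}), the observation that $\int_0^1\partial_{x_2}\varphi(x_1,\cdot)\,dx_2=0$, and the subtraction of the $x_2$-partial mean all coincide with the paper's argument. However, the final pairing step contains a genuine gap: you close with Cauchy--Schwarz in $L^2$ to obtain $\bigl|\int_{\hat Q}u_2\,\partial_{x_2}\varphi\bigr|\lesssim |u_2|_{AH_2^s(\hat Q)}\,\|\partial_{x_2}\varphi\|_{0,\hat Q}$, and the factor $\|\partial_{x_2}\varphi\|_{0,\hat Q}$ is not finite. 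Any admissible cut-off with $\varphi=1$ on $\hat e_4$ and $\varphi=0$ on the remaining three edges has a jump in its boundary trace at the two corners of $\hat e_4$; near such a corner $\varphi$ behaves like a function of the polar angle alone, so $\nabla\varphi\sim r^{-1}$ and both $\partial_{x_1}\varphi$ and $\partial_{x_2}\varphi$ fail to be square integrable. This is precisely why the cut-off lives only in $H^{1-\eps}(\hat Q)$, so that $\partial_{x_2}\varphi$ is merely an $H^{-\eps}$-distribution, and the $L^2$--$L^2$ pairing is not available.

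The fix is to pair $u_2(x_1,\cdot)-\CM_{\hat I}u_2(x_1,\cdot)$ with $\partial_{x_2}\varphi(x_1,\cdot)$ in the $H^\eps(0,1)$--$H^{-\eps}(0,1)$ duality, then control $\|\partial_{x_2}\varphi(x_1,\cdot)\|_{H^{-\eps}(0,1)}\lesssim\|\varphi(x_1,\cdot)\|_{H^{1-\eps}(0,1)}$ by the mapping property $\partial:H^{1-\eps}\to H^{-\eps}$ (Theorem~1.4.4.6 of~\cite{Grisvard_85_EPN}) and upgrade your fractional Poincar\'e--Wirtinger bound from $L^2$ to $H^\eps$, i.e.\ $\|u_2(x_1,\cdot)-\CM_{\hat I}u_2(x_1,\cdot)\|_{H^\eps(0,1)}\lesssim|u_2(x_1,\cdot)|_{H^s(0,1)}$; a Cauchy--Schwarz in $x_1$ then gives the stated bound with a finite constant. (The same remark applies to your bound on $\int_{\hat Q}u_1\,\partial_{x_1}\varphi$, which also needs the $H^\eps/H^{-\eps}$ duality rather than $L^2$ Cauchy--Schwarz; and the identity $\int_0^1\partial_{x_2}\varphi(x_1,\cdot)\,dx_2=0$ requires a short density argument to be valid for $\varphi\in H^{1-\eps}(\hat Q)$.) These corrections recover exactly the paper's proof; the duality level $\eps\in(0,s)$ is the place where the assumption $s>0$ is actually used.
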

\begin{proof}
By symmetry, it \rblf{will suffice} to prove \rblf{both statements} for $l=1$.
\rblf{One has}
\[
  \hPi_{\rm RT} \bu = (a + c x_1, b + d x_2)^\top\quad
  \hbox{with $a,b,c, d\in {\field{R}}$}.
\]
Hence
\[
  \rblf{\| (\hPi_{\rm RT} \bu)_1 \|_{0,\hat Q} = \| a + c x_1 \|_{0,\hat Q} \lesssim |a|+|c|},
\]
and \rblf{it remains} to estimate $|a|$ and $|c|$. \rblf{Observe} that
\[
     a = -(\bu \cdot \bn, 1)_{0,\hat e_4}\quad \hbox{and}\quad
     a + c = (\bu \cdot \bn, 1)_{0,\hat e_2}.
\]
Again by symmetry, it \rblf{will suffice} to estimate $(\bu \cdot \bn, 1)_{0,\hat e_4}$.

\rblf{If $\bu \in \bH^{s}(\hat Q)$ with $s > 1/2$, then the trace of $u_1$ on $\hat e_4$
is well-defined, and}
\[
  \rblf{|a| = |(\bu \cdot \bn, 1)_{0,\hat e_4}| = |(u_1, 1)_{0,\hat e_4}| \lesssim
  \|u_1\|_{H^{s-1/2}(\hat e_4)} \lesssim \|u_1\|_{H^s(\hat Q)}.}
\]
\rblf{This proves statement (i) (for $l=1$).}

\rblf{Now, let us consider $\bu \in \bH^{s}(\hat Q) \cap \bH(\div,\hat Q)$ for $0 < s \le 1/2$.
In order to estimate $(\bu \cdot \bn, 1)_{0,\hat e_4}$ in this case,}
we fix \rblf{a function} $\varphi\in H^{1-\eps}(\hat Q)$,
$\eps\in (0,s)$, such that $\varphi=1$ on $\hat e_4$ and
$\varphi=0$ on \rblf{$\partial \hat Q {\setminus} \hat e_4$}. 
\rblf{Then by Green's formula (\ref{serge9/05:2}) we have}
\be
    (\bu \cdot \bn, 1)_{0,\hat e_4} =
    (\bu, \nabla \varphi)_{0,\hat Q}\,
    \rblf{+ \int_{\hat Q} \div\bu\, \varphi} =
    \rblf{(u_1,\partial_1 \varphi)_{0,\hat Q} +
    (u_2,\partial_2 \varphi)_{0,\hat Q} + \int_{\hat Q} \div\bu\, \varphi}.
    \label{serge9/05:8}
\ee
For the term $(u_1,\partial_1 \varphi)_{0,\hat Q}$, we first use a standard duality argument,
\[
  |(u_1,\partial_1 \varphi)_{0,\hat Q}| \leq
  \|u_1\|_{H^{\eps}(\hat Q)}\, \|\partial_1 \varphi\|_{H^{-\eps}(\hat Q)},
\]
and by the continuity property of $\partial_1: H^{1-\eps}(\hat Q) \to H^{-\eps}(\hat Q)$
(see Theorem 1.4.4.6 of \cite{Grisvard_85_EPN}) we find
\be \label{serge9/05:3}
    |(u_1,\partial_1 \varphi)_{0,\hat Q}| \lesssim
    \|u_1\|_{H^{s}(\hat Q)}\, \|\varphi\|_{H^{1-\eps}(\hat Q)}.
\ee
The term $(u_2,\partial_2 \varphi)_{0,\hat Q}$ requires more \rblf{subtle} analysis.
First by (\ref{serge9/05:4}) and Fubini's theorem, we can write (hereafter, $\hat I = (0,1)$)
\be \label{serge9/05:5}
    (u_2,\partial_2 \varphi)_{0,\hat Q} =
    \int_0^1 (u_2(x_1,\cdot),\partial_2 \varphi(x_1,\cdot))_{0,\hat I} \,dx_1.
\ee
Now we use a density argument to show that  
\be \label{serge9/05:6}
    \rkserge{\int_0^1  h(x_1)(1,\partial_2 \varphi(x_1,\cdot))_{0,\hat I}\,dx_1 = 0,}
\ee
\rkserge{for all $h\in L^2(\hat I)$}.
Indeed, fix a sequence of smooth function $\varphi_n$ such that
\[
  \varphi_n\to \varphi \hbox{ in } H^{1-\eps}(\hat Q) \hbox{ as } n\to\infty.
\]
Then for all $x_1 \in \hat I = (0,1)$, we have
\[
  (1,\partial_2 \varphi_n(x_1,\cdot))_{0,\hat I} =
  \int_0^1 \partial_2 \varphi_n(x_1,x_2)\,dx_2=\varphi_n(x_1,1)-\varphi_n(x_1,0).
\]
\rkserge{For any $h\in L^2(\hat I)$, multiplying   this identity by $h$ and 
integrating the result in $x_1 \in (0,1)$}, we obtain
\[
 \rkserge{ \int_0^1 h(x_1)(1,\partial_2 \varphi_n(x_1,\cdot))_{0,\hat I}\,dx_1 =
  \int_{\hat e_3} h\varphi_n-\int_{\hat  e_1} h \varphi_n.}
\]
\rblf{Hence, as $n \to \infty$}
we find that
\[
 \rkserge{ \int_0^1 h(x_1)(1,\partial_2 \varphi(x_1,\cdot))_{0,\hat I}\,dx_1 =
  \int_{\hat  e_3} h \varphi-\int_{\hat  e_1} h\varphi,}
\]
which proves (\ref{serge9/05:6}) \rblf{by recalling} that $\varphi=0$ on $\hat e_1$ and $\hat e_3$.

Coming back to (\ref{serge9/05:5}) and using (\ref{serge9/05:6}), we have
\[
  (u_2,\partial_2 \varphi)_{0,\hat Q} =
  \int_0^1 \big( u_2(x_1,\cdot)-{\CM}_{\hat I}(u_2(x_1,\cdot)),
                 \partial_2 \varphi(x_1,\cdot) \big)_{0,\hat I} \,dx_1,
\]
where ${\CM}_{\hat I}(u_2(x_1,\cdot)) = \int_0^1u_2(x_1,x_2)\,dx_2$
is the mean of $u_2(x_1,\cdot)$ on $\hat I = (0,1)$
\rkserge{(clearly ${\CM}_{\hat I}(u_2(x_1,\cdot)) \in L^2(\hat I)$)}.
At this stage we first use a duality argument and then again Theorem 1.4.4.6 of \cite{Grisvard_85_EPN}
to obtain
\bea
     |(u_2,\partial_2 \varphi)_{0,\hat Q}|
     & \leq &
     \int_0^1\| u_2(x_1,\cdot)-{\CM}_{\hat I}(u_2(x_1,\cdot)\|_{H^{\eps}(0,1)}
     \|\partial_2 \varphi(x_1,\cdot)\|_{H^{-\eps}(0,1)} \,dx_1
     \nonumber
     \\
     &\lesssim &
     \int_0^1\| u_2(x_1,\cdot)-{\CM}_{\hat I}(u_2(x_1,\cdot)\|_{H^{\eps}(0,1)}
     \|\varphi(x_1,\cdot)\|_{H^{1-\eps}(0,1)} \,dx_1.
     \label{alex30/07:1}
\eea
\rblf{We use Friedrichs' inequality to estimate}
\[
  \| u_2(x_1,\cdot)-{\CM}_{\hat I}(u_2(x_1,\cdot)\|_{H^{\eps}(0,1)} \leq
  \| u_2(x_1,\cdot)-{\CM}_{\hat I}(u_2(x_1,\cdot)\|_{H^{s}(0,1)} \lesssim
  | u_2(x_1,\cdot)|_{H^{s}(0,1)}.
\]
Using this estimate in (\ref{alex30/07:1}) and applying the Cauchy-Schwarz inequality we arrive at
\be \label{serge9/05:7}
    |(u_2,\partial_2 \varphi)_{0,\hat Q}| \lesssim 
    |u_{2}|_{AH_{2}^{s}(\hat Q)}
    \Big(\int_0^1
    \| \varphi(x_1,\cdot)\|_{H^{1-\eps}(0,1)}^2 \,dx_1\Big)^\frac12.
\ee
\rblf{The last term on the right-hand side of (\ref{serge9/05:8})
is estimated by applying the Cauchy-Schwarz inequality:
\[
  \int_{\hat Q} \div\,\bu \varphi \le \|\div\,\bu\|_{0,\hat Q}\, \|\varphi\|_{0,\hat Q}.
\]
}
Using this estimate and \rblf{inequalities (\ref{serge9/05:3}), (\ref{serge9/05:7}) in (\ref{serge9/05:8})},
we obtain (\ref{serge9/05:1}) (for $l=1$).
\end{proof}

\begin{corollary} \label{coroserge16/05:1}
\rblf{{\rm (i)} For all $\bu \in \bH^{s}(\hat Q)$ with $s > 1/2$, we have for $l=1,\,2$}
\[
    \rblf{\|u_l -(\hPi_{\rm RT} \bu)_l\|_{0,\hat Q}\lesssim
    |u_l|_{H^{s}(\hat Q)}.}
\]
\rblf{{\rm (ii)} For all $\bu \in \bH^{s}(\hat Q) \cap \bH(\div,\hat Q)$ with $0 < s \le 1/2$,
we have for $l=1,\,2$}
\[
    \|u_l -(\hPi_{\rm RT} \bu)_l\|_{0,\hat Q}\lesssim
    |u_l|_{H^{s}(\hat Q)} + |u_{l+1}|_{AH_{l+1}^{s}(\hat Q)} \rblf{+ \|\div\,\bu\|_{0,\hat Q}}\ \ 
		\hbox{\rm (mod. 2).}
\]
\end{corollary}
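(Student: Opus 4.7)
The plan is to reduce both estimates to the stability bounds of Theorem~\ref{thm_RT_square} by subtracting an appropriately chosen constant vector field and invoking a Deny--Lions type inequality. The starting observation is that $\mathbb{R}^2 \subset \RT_0(\hat Q)$, so the interpolant $\hPi_{\rm RT}$ preserves constants: $\hPi_{\rm RT}\bc = \bc$ for every $\bc = (c_1,c_2)^\top \in \mathbb{R}^2$. In particular, for such $\bc$ the field $\bu - \bc$ retains the regularity of $\bu$ (in either the $\bH^s$- or the $\bH^s \cap \bH(\div)$-sense), so $\hPi_{\rm RT}(\bu - \bc)$ is well-defined and equals $\hPi_{\rm RT}\bu - \bc$. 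Writing
\[
  u_l - (\hPi_{\rm RT}\bu)_l = (u_l - c_l) - \big(\hPi_{\rm RT}(\bu - \bc)\big)_l
\]
and applying the triangle inequality, the problem splits into estimating $\|u_l - c_l\|_{0,\hat Q}$ and $\|(\hPi_{\rm RT}(\bu-\bc))_l\|_{0,\hat Q}$.

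First I would apply Theorem~\ref{thm_RT_square} to $\bu - \bc$ in place of $\bu$. For statement (i) this directly gives
\[
  \|(\hPi_{\rm RT}(\bu-\bc))_l\|_{0,\hat Q} \lesssim \|u_l - c_l\|_{H^s(\hat Q)},
\]
and hence $\|u_l - (\hPi_{\rm RT}\bu)_l\|_{0,\hat Q} \lesssim \|u_l - c_l\|_{H^s(\hat Q)}$. For statement (ii) one has $\div\,\bc = 0$, and the anisotropic seminorm $|\cdot|_{AH_{l+1}^s(\hat Q)}$ vanishes on constants (being built from one-dimensional Slobodeckij seminorms); Theorem~\ref{thm_RT_square}(ii) therefore yields
\[
  \|(\hPi_{\rm RT}(\bu-\bc))_l\|_{0,\hat Q} \lesssim \|u_l - c_l\|_{H^s(\hat Q)} + |u_{l+1}|_{AH_{l+1}^s(\hat Q)} + \|\div\,\bu\|_{0,\hat Q}.
\]

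To conclude I would take the infimum over $c_l \in \mathbb{R}$ on the right-hand sides and apply the standard Deny--Lions/Bramble--Hilbert inequality
\[
  \inf_{c \in \mathbb{R}} \|w - c\|_{H^s(\hat Q)} \lesssim |w|_{H^s(\hat Q)}, \qquad 0 < s \le 1,
\]
which holds on the connected Lipschitz domain $\hat Q$ since the Slobodeckij seminorm $|\cdot|_{H^s(\hat Q)}$ defines a norm on the quotient $H^s(\hat Q)/\mathbb{R}$. This delivers the stated bounds. There is no real obstacle here: the argument is routine once Theorem~\ref{thm_RT_square} is in place, and the only point to verify carefully is that each of the three quantities on the right-hand side of the stability estimate in part (ii) ($H^s$-norm of $u_l$, anisotropic seminorm of $u_{l+1}$, and $L^2$-norm of $\div\,\bu$) is invariant under subtraction of a constant vector field, so that only the $H^s$-norm of $u_l - c_l$ requires the quotient-norm reduction.
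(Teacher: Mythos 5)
Your proof is correct and takes essentially the same route as the paper: the paper subtracts the explicit constant $\CM_{\hat Q}u_l$ from the $l$-th component, uses the invariance of $\div$, of the anisotropic seminorm of $u_{l+1}$, and of the interpolation error under subtraction of constants, and then invokes the equivalence $|\tilde u_l|_{H^s(\hat Q)} \simeq \|\tilde u_l\|_{H^s(\hat Q)}$ for zero-mean functions, which is just the Deny--Lions inequality you quote with the minimizer made explicit. Your formulation via an infimum over $c_l$ together with the quotient-norm argument is a cosmetic repackaging of the same reduction to Theorem~\ref{thm_RT_square}.
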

\begin{proof}
It is sufficient to prove only statement (ii).
For $l=1$, we take $\widetilde\bu = \bu - (\CM_{\hat Q} u_1, 0)^\top$,
\rblf{where $\CM_{\hat Q} u_1 = \int_{\hat Q} u_1$.
One has}
\[
  \widetilde\bu - \hPi_{\rm RT} \widetilde\bu = \bu - \hPi_{\rm RT} \bu,\qquad
  \div\,\widetilde\bu = \div\,\bu,
\]
and
\[
  |u_1|_{H^{1/2}(\hat Q)} = |\tilde u_1|_{H^{1/2}(\hat Q)} \simeq \|\tilde u_1\|_{H^{1/2}(\hat Q)}.
\]
The assertion then follows by applying estimate (\ref{serge9/05:1}) to $\widetilde\bu$.
The proof is analogous for $l=2$.
\end{proof}

\begin{corollary} \label{coroalex25/06:1}
\rblf{Let $s \in (0,1/2]$.
For all  $\bu \in \bH^{s}(\hat Q)$ such that $\div\,\bu \in {\field{R}}$,
we have for $l=1,\,2$}
\be \label{alex25/06:2}
    \rblf{\| (\hPi_{\rm RT} \bu)_l\|_{0,\hat Q}\lesssim
    \|u_l\|_{H^{s}(\hat Q)} + |u_{l+1}|_{AH_{l+1}^{s}(\hat Q)}\ \ \hbox{\rm (mod. $2$)}}
\ee
\rblf{and}
\be \label{alex25/06:3}
    \rblf{\|u_l -(\hPi_{\rm RT}\bu)_l\|_{0,\hat Q}\lesssim
    |u_l|_{H^{s}(\hat Q)} + |u_{l+1}|_{AH_{l+1}^{s}(\hat Q)}\ \ 
    \hbox{\rm (mod. $2$).}}
\ee
\end{corollary}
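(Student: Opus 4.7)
The strategy is to prove (\ref{alex25/06:2}) by revisiting the proof of Theorem~\ref{thm_RT_square}(ii) and exploiting the extra hypothesis $\div\,\bu \in {\field{R}}$, and then to deduce (\ref{alex25/06:3}) from (\ref{alex25/06:2}) by exactly the same mean-subtraction trick already used in the proof of Corollary~\ref{coroserge16/05:1}(ii).

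For (\ref{alex25/06:2}), by symmetry it suffices to treat $l=1$. Writing $\hPi_{\rm RT}\bu = (a + c x_1, b + d x_2)^\top$, one has $\|(\hPi_{\rm RT}\bu)_1\|_{0,\hat Q} \lesssim |a|+|c|$, and recalls from the proof of Theorem~\ref{thm_RT_square} that $a = -(\bu\cdot\bn,1)_{0,\hat e_4}$ and $a+c = (\bu\cdot\bn,1)_{0,\hat e_2}$, so everything reduces to estimating $(\bu\cdot\bn,1)_{0,\hat e_i}$ for $i=2,4$; I focus on $i=4$. The crux is to refine the choice of the test function $\varphi$ in Green's formula (\ref{serge9/05:2}): I require $\varphi \in H^{1-\eps}(\hat Q)$, $\varphi = 1$ on $\hat e_4$, $\varphi = 0$ on $\partial\hat Q{\setminus}\hat e_4$, \emph{and additionally} $\int_{\hat Q}\varphi = 0$, keeping $\|\varphi\|_{H^{1-\eps}(\hat Q)} \lesssim 1$. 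Such a $\varphi$ is obtained from the function used in the proof of Theorem~\ref{thm_RT_square}(ii) by subtracting an appropriate scalar multiple of a fixed bump $\psi \in C_c^\infty(\hat Q)$ with $\int_{\hat Q}\psi = 1$; since $\psi$ vanishes on $\partial\hat Q$, the boundary traces are preserved, and the scaling factor is controlled by the original $L^2$-norm. Green's formula then yields
\[
(\bu\cdot\bn,1)_{0,\hat e_4}
= (u_1,\partial_1\varphi)_{0,\hat Q}
+ (u_2,\partial_2\varphi)_{0,\hat Q}
+ \int_{\hat Q}\div\,\bu\;\varphi,
\]
in which the last term now vanishes because $\div\,\bu \in {\field{R}}$ and $\int_{\hat Q}\varphi = 0$. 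The first two terms are estimated \emph{verbatim} as in the proof of Theorem~\ref{thm_RT_square}(ii), producing $|a| \lesssim \|u_1\|_{H^s(\hat Q)} + |u_2|_{AH_2^s(\hat Q)}$; the bound on $(\bu\cdot\bn,1)_{0,\hat e_2}$ is obtained analogously with a symmetric test function, yielding (\ref{alex25/06:2}).

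For (\ref{alex25/06:3}), I mimic the proof of Corollary~\ref{coroserge16/05:1}(ii), simply invoking (\ref{alex25/06:2}) in place of (\ref{serge9/05:1}). For $l=1$, set $\widetilde\bu := \bu - (\CM_{\hat Q} u_1, 0)^\top$; since $(\CM_{\hat Q}u_1, 0)^\top \in \RT_0(\hat Q)$ one has $\widetilde\bu - \hPi_{\rm RT}\widetilde\bu = \bu - \hPi_{\rm RT}\bu$, while $\div\,\widetilde\bu = \div\,\bu \in {\field{R}}$, so (\ref{alex25/06:2}) applies to $\widetilde\bu$. Combining the triangle inequality, Poincaré's inequality $\|\widetilde u_1\|_{0,\hat Q} \lesssim |\widetilde u_1|_{H^s(\hat Q)} = |u_1|_{H^s(\hat Q)}$ (valid since $\widetilde u_1$ has zero mean and $s>0$), and the equivalence $\|\widetilde u_1\|_{H^s(\hat Q)} \simeq |u_1|_{H^s(\hat Q)}$ with $|\widetilde u_2|_{AH_2^s(\hat Q)} = |u_2|_{AH_2^s(\hat Q)}$, one deduces (\ref{alex25/06:3}) for $l=1$; the case $l=2$ is symmetric.

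The only step that is not a direct transcription of previously presented arguments is the construction of the modified test function $\varphi$ with the additional zero-mean constraint while retaining the prescribed boundary data and $H^{1-\eps}$-norm bound; this is a routine perturbation and poses no substantial difficulty.
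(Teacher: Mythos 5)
Your proof is correct and follows essentially the same route as the paper: modify the test function $\varphi$ by subtracting a scalar multiple of a fixed bump $\psi\in C_0^\infty(\hat Q)$ with unit integral so that $\int_{\hat Q}\varphi=0$ while preserving boundary traces, note that the term $\int_{\hat Q}\div\,\bu\,\varphi$ then vanishes when $\div\,\bu\in{\field{R}}$, and obtain (\ref{alex25/06:3}) by the same mean-subtraction argument as in Corollary~\ref{coroserge16/05:1}(ii). No gaps or divergences from the paper's argument.
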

\begin{proof}
\rblf{
Note that the function $\varphi \in H^{1-\eps}(\hat Q)$ in the proof of Theorem~\ref{thm_RT_square}~(ii)
can be chosen to have zero average on $\hat Q$, i.e., $\int_{\hat Q} \varphi = 0$.
Indeed, if this is not the case then we fix $\psi \in C^{\infty}_0(\hat Q)$ such that 
$\int_{\hat Q}\psi=1$, and consider
$\varphi-\big(\int_{\hat Q}\varphi\big) \psi \in H^{1-\eps}(\hat Q)$ that has the same values
on $\partial\hat Q$ as $\varphi$ and also has zero average on $\hat Q$.
Then, for $\div\,\bu \in {\field{R}}$, the last term on the right-hand side of (\ref{serge9/05:8}) vanishes,
and (\ref{alex25/06:2}) (resp., (\ref{alex25/06:3}))
follows by the same arguments as in the proof of Theorem~\ref{thm_RT_square}~(ii)
(resp., Corollary~\ref{coroserge16/05:1}~(ii)).
}
\end{proof}

\subsection{\rblf{The reference triangle}}

\rblf{On the references triangle $\hat T$,
we denote by $\hat e_1$ (resp. $\hat  e_2$) the edge on the $x_1$-axis
(resp., parallel to the $x_2$-axis),
and by $\hat e_3$ the oblique edge (see Figure~\ref{fig_2}).}
We recall \rblf{from} \cite{BrezziF_91_MHF} that the lowest order Raviart-Thomas elements
on $\hat T$ are defined as
\[
  \RT_0(\hat T) = \left\{(a, b)^\top + c(x_1, x_2)^\top;\; a,b,c \in {\field{R}}\right\},
\]
and that the associated degrees of freedom are given by
\[
  \int_{\hat e_i} \bu \cdot \bn\,ds,\quad i = 1,2,3.
\]

\begin{theorem} \label{thm_RT_triangle}
\rblf{{\rm (i)} For all $\bu \in \bH^{s}(\hat T) \cap \bH(\div,\hat T)$ with $s > 1/2$,
we have for $l=1,\,2$}
\be \label{serge9/05:10(i)}
    \rblf{\|(\hPi_{\rm RT} \bu)_l\|_{0,\hat T} \lesssim
    \|u_l\|_{H^{s}(\hat T)} + \|\div\,\bu\|_{0,\hat T}.}
\ee
\rblf{{\rm (ii)} For all $\bu \in \bH^{s}(\hat T) \cap \bH(\div,\hat T)$ with $0 < s \le 1/2$,
we have for $l=1,\,2$}
\be \label{serge9/05:10}
    \|(\hPi_{\rm RT} \bu)_l\|_{0,\hat T} \lesssim
    \|u_l\|_{H^{s}(\hat T)} + |u_{l+1}|_{AH_{l+1}^{s}(\hat T)} +
    \rblf{\|\div\,\bu\|_{0,\hat T}}\ \ \hbox{\rm (mod. $2$).}
\ee
\end{theorem}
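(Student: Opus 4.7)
The plan is to mimic the proof of Theorem~\ref{thm_RT_square} on the triangle, using the expansion $\hPi_{\rm RT}\bu = (a,b)^{\top} + c(x_{1},x_{2})^{\top}$ so that $\|(\hPi_{\rm RT}\bu)_{1}\|_{0,\hat T}\lesssim |a|+|c|$ and $\|(\hPi_{\rm RT}\bu)_{2}\|_{0,\hat T}\lesssim |b|+|c|$. The first step will be to recover the coefficients from the three edge degrees of freedom. A direct computation gives $\int_{\hat e_{1}}\bu\cdot\bn = -b$, $\int_{\hat e_{2}}\bu\cdot\bn = a+c$, $\int_{\hat e_{3}}\bu\cdot\bn = b-a$, and summing yields $\int_{\hat T}\div\,\bu = c$. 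Thus $c = \int_{\hat T}\div\,\bu$ immediately satisfies $|c|\lesssim \|\div\,\bu\|_{0,\hat T}$, while $a = \int_{\hat e_{2}}\bu\cdot\bn - c$ and $b = -\int_{\hat e_{1}}\bu\cdot\bn$ are supported on the two axis-aligned edges. This is the key bookkeeping trick: invoking the compatibility relation obviates any direct estimate on the oblique edge~$\hat e_{3}$.

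For part~(i), once $s>1/2$ the usual trace embedding $H^{s}(\hat T)\hookrightarrow H^{s-1/2}(\hat e_{l})$ bounds $|b|\lesssim\|u_{2}\|_{H^{s}(\hat T)}$ and $|\int_{\hat e_{2}}u_{1}|\lesssim\|u_{1}\|_{H^{s}(\hat T)}$, and (\ref{serge9/05:10(i)}) follows.

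For part~(ii) the normal trace is only weakly defined, so I would proceed via Green's formula~(\ref{serge9/05:2}) with a carefully chosen cutoff. For the edge $\hat e_{2}$ fix $\varphi\in H^{1-\eps}(\hat T)$ with $\eps\in(0,s)$, $\varphi=1$ on $\hat e_{2}$ and $\varphi=0$ on $\hat e_{1}\cup\hat e_{3}$; then
\[
  \int_{\hat e_{2}}\bu\cdot\bn = (u_{1},\partial_{1}\varphi)_{0,\hat T}
                              + (u_{2},\partial_{2}\varphi)_{0,\hat T}
                              + \int_{\hat T}\div\,\bu\,\varphi.
\]
The first and third terms are handled exactly as in the square case, using Theorem~1.4.4.6 of~\cite{Grisvard_85_EPN} and Cauchy--Schwarz. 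For the middle term, Fubini on the triangle gives
\[
  (u_{2},\partial_{2}\varphi)_{0,\hat T}
  = \int_{0}^{1}\bigl(u_{2}(x_{1},\cdot),\partial_{2}\varphi(x_{1},\cdot)\bigr)_{0,(0,x_{1})}\,dx_{1},
\]
and the crucial identity is that $\int_{0}^{x_{1}}\partial_{2}\varphi(x_{1},x_{2})\,dx_{2} = \varphi(x_{1},x_{1})-\varphi(x_{1},0)=0$, because $\varphi$ vanishes on both $\hat e_{3}$ and $\hat e_{1}$. By the same density argument as in the square case, this permits subtracting $\CM_{(0,x_{1})}u_{2}(x_{1},\cdot)$ at no cost. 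Friedrichs' inequality on $(0,x_{1})$ then yields
\[
  \bigl|(u_{2},\partial_{2}\varphi)_{0,\hat T}\bigr|
  \lesssim |u_{2}|_{AH_{2}^{s}(\hat T)}
    \Bigl(\int_{0}^{1}\|\varphi(x_{1},\cdot)\|_{H^{1-\eps}(0,x_{1})}^{2}\,dx_{1}\Bigr)^{1/2}.
\]
The analogous argument for the edge $\hat e_{1}$ uses a cutoff vanishing on $\hat e_{2}\cup\hat e_{3}$, the swapped Fubini decomposition over the intervals $(x_{2},1)$, and the cancellation $\varphi(1,x_{2})-\varphi(x_{2},x_{2})=0$, producing a bound in terms of $|u_{1}|_{AH_{1}^{s}(\hat T)}$.

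The last ingredient is to absorb the sliced norms by $\|\varphi\|_{H^{1-\eps}(\hat T)}$. I would obtain this by extending $\varphi$ to $\hat Q$ via the continuous extension operator $\sfE:H^{1-\eps}(\hat T)\to H^{1-\eps}(\hat Q)$ and applying~(\ref{serge9/05:4}) on the square; equivalently this is (\ref{serge9/05:11}) applied to $\varphi$. Choosing such a cutoff $\varphi$ with $\|\varphi\|_{H^{1-\eps}(\hat T)}\lesssim 1$ then finishes the estimate of $\int_{\hat e_{l}}\bu\cdot\bn$, and combining with the divergence bound on $c$ gives (\ref{serge9/05:10}). The main obstacle I anticipate is not the algebra but the verification that a single cutoff $\varphi$ can simultaneously satisfy the three conditions (equal to $1$ on one axis-aligned edge, vanishing on the other axis-aligned edge and on $\hat e_{3}$, with $H^{1-\eps}$-norm controlled); this is where the compatibility trick for $\hat e_{3}$ pays off, since we never need a cutoff centred on the oblique edge.
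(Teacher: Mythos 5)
Your approach is structurally identical to the paper's: recover the Raviart--Thomas coefficients from the edge degrees of freedom, observe that $c=\int_{\hat T}\div\,\bu$ so the oblique edge $\hat e_3$ need never be touched, treat $s>1/2$ via the trace theorem, and treat $0<s\le1/2$ via Green's formula~(\ref{serge9/05:2}) with a cutoff supported away from the other two edges, followed by Fubini, subtraction of the slicewise mean, and Friedrichs' inequality. All of that is correct, including the pleasant observation that the cancellation $\varphi(x_1,x_1)-\varphi(x_1,0)=0$ uses the vanishing of $\varphi$ on both $\hat e_1$ and $\hat e_3$ (and symmetrically for $l=2$).

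There is, however, one genuine gap, and it is not the one you flag. You worry at the end about whether a cutoff $\varphi\in H^{1-\eps}(\hat T)$ with the required boundary values exists; that is standard and the paper does not belabour it either. The real difficulty you gloss over is the step after subtracting the mean. You write that ``Friedrichs' inequality on $(0,x_1)$ then yields'' the bound involving $\|\varphi(x_1,\cdot)\|_{H^{1-\eps}(0,x_1)}$, but getting there also requires the estimate
\[
\|\partial_2\varphi(x_1,\cdot)\|_{H^{-\eps}(0,x_1)}\;\lesssim\;\|\varphi(x_1,\cdot)\|_{H^{1-\eps}(0,x_1)}
\]
\emph{with a constant independent of $x_1$}. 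A naive application of Theorem~1.4.4.6 of~\cite{Grisvard_85_EPN} on each interval $(0,x_1)$ produces a constant that degenerates as $x_1\to 0$, and ``the same density argument as in the square case'' does not fix this, because the square argument relies on the interval being the fixed $(0,1)$. The paper's cure (leading to~(\ref{alex28/07:1})) is a scaling argument in which the factor $x_1^{-1/2+\eps}$ that appears when scaling the $H^{-\eps}$-norm is precisely cancelled by the factor $x_1^{1/2-\eps}$ from scaling the $H^{1-\eps}$-\emph{semi}norm; to be allowed to work with seminorms one again needs both that $\partial_2\varphi(x_1,\cdot)$ has zero average (so the test functions $\hat v$ in the dual pairing may be taken with zero mean, making $|\hat v|_{H^{\eps}}\simeq\|\hat v\|_{H^{\eps}}$) and that $\varphi(x_1,\cdot)\in H_0^{1-\eps}(0,x_1)$ (so $\|\hat\varphi_n\|_{H^{1-\eps}}\simeq|\hat\varphi_n|_{H^{1-\eps}}$). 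You have identified the zero-average property for the purpose of subtracting $\CM_{(0,x_1)}u_2$, but not for this second, scaling-critical, purpose. Filling this in would make your argument complete and coincide with the paper's.
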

\begin{proof}
\rblf{We will prove both statements for $l=1$. The proof is analogous in the case $l=2$.
One has 
$\hPi_{\rm RT} \bu = (a, b)^\top + c(x_1, x_2)^\top$
with $a,b,c \in {\field{R}}$. Hence}
\[
  \rblf{\| (\hPi_{\rm RT} \bu)_1 \|_{0,\hat T} = \| a + c x_1 \|_{0,\hat T} \lesssim |a|+|c| \le |a+c| + 2|c|}.
\]
Observe that
\[
  2c = \div\,\hPi_{\rm RT}\bu = \rblf{2\int_{\hat T}\div\,\bu}.
\]
\rblf{By the Cauchy-Schwarz inequality this yields}
\[
  |c| \lesssim \|\div\,\bu\|_{0,\hat T}.
\]
Therefore in order to estimate $\| (\hPi_{\rm RT} \bu)_1\|_{0,\hat T}$
\rblf{one needs to bound $|a+c| = \left|(\bu \cdot \bn, 1)_{0,\hat e_2}\right|$}.

\rblf{If $\bu \in \bH^{s}(\hat T)$ with $s > 1/2$, then we can use trace theorem
to estimate}
\[
  \rblf{|a + c| = |(\bu \cdot \bn, 1)_{0,\hat e_2}| =
  |(u_1, 1)_{0,\hat e_2}| \lesssim \|u_1\|_{H^s(\hat T)}.}
\]
\rblf{This proves statement (i) (for $l=1$).}

\rblf{Now, let us consider $\bu \in \bH^{s}(\hat T) \cap \bH(\div,\hat T)$ for $0 < s \le 1/2$.
In order to estimate $(\bu \cdot \bn, 1)_{0,\hat e_2}$ in this case,}
we fix a function $\varphi\in H^{1-\eps}(\hat T)$, $\eps\in (0,s)$,
such that $\varphi=1$ on $\hat e_2$ and $\varphi=0$ on \rblf{$\partial T {\setminus} \hat e_2$}. 
Then by Green's formula (\ref{serge9/05:2}) we have
\bea
    (\bu \cdot \bn, 1)_{0,\hat e_2} =
    (u, \nabla \varphi)_{0,\hat T}\, \rblf{+ \int_{\hat T} \div\bu\, \varphi} =
    \rblf{(u_1,\partial_1 \varphi)_{0,\hat T} +
          (u_2,\partial_2 \varphi)_{0,\hat T} + \int_{\hat T} \div\bu\, \varphi.
         }
    \label{serge9/05:12}
\eea
As before \rblf{(cf. (\ref{serge9/05:3})), it is easy to} show that
\be \label{serge9/05:13}
     |(u_1,\partial_1 \varphi)_{0,\hat T}| \lesssim
     \|u_1\|_{H^{s}(\hat T)}\, \|\varphi\|_{H^{1-\eps}(\hat T)}.
\ee
The term  $(u_2,\partial_2 \varphi)_{0,\hat T}$
is treated \rblf{as in} the case of the reference square
except that \rblf{the interval $\hat I = (0,1)$} in the $x_2$-variable will be replaced
by \rblf{the interval $I(x_1) := (0,x_1)$}, but the arguments remain mainly unchanged
because $x_1 < 1$.
More precisely, first by (\ref{serge9/05:11}) and Fubini's theorem, we may write
\be \label{serge9/05:15}
    (u_2,\partial_2 \varphi)_{0,\hat T} =
    \int_0^1
    (u_2(x_1,\cdot),\partial_2 \varphi(x_1,\cdot))_{0,I(x_1)}
    \,dx_1.
\ee
With a property \rblf{similar to} (\ref{serge9/05:6}) we deduce that
\[
  (u_2,\partial_2 \varphi)_{0,\hat T} =
  \int_0^1
  \big(u_2(x_1,\cdot)-{\CM}_{I(x_1)}(u_2(x_1,\cdot)),
       \partial_2 \varphi(x_1,\cdot)\big)_{0,I(x_1)}
  \,dx_1,
\]
where ${\CM}_{I(x_1)}(u_2(x_1,\cdot))=\frac{1}{x_1}\int_0^{x_1}u_2(x_1,x_2)\,dx_2$
is the mean of $u_2(x_1,\cdot)$ on $I(x_1) = (0,x_1)$. 
Then a duality argument yields
\be \label{serge9/05:16}
    |(u_2,\partial_2 \varphi)_{0,\hat T}| \leq
    \int_0^1\| u_2(x_1,\cdot)-{\CM}_{I(x_1)}(u_2(x_1,\cdot))\|_{H^{\eps}(I(x_1))}
    \|\partial_2 \varphi(x_1,\cdot)\|_{H^{-\eps}(I(x_1))} \,dx_1.
\ee
\rblf{Using a scaling argument and Friedrichs' inequality we estimate}  
\bea \label{serge12/05:1}
     \nonumber
     \| u_2(x_1,\cdot)-{\CM}_{I(x_1)}(u_2(x_1,\cdot))\|_{H^{\eps}(I(x_1))}
     & \leq &
     \| u_2(x_1,\cdot)-{\CM}_{I(x_1)}(u_2(x_1,\cdot))\|_{H^{s}(I(x_1))}\qquad
     \\
     & \leq &
     C | u_2(x_1,\cdot)|_{H^{s}(I(x_1))},
\eea
with $C>0$ independent of $x_1\in (0,1)$.

For the second factor in the integrand in (\ref{serge9/05:16}),
in order to apply Theorem 1.4.4.6 of \cite{Grisvard_85_EPN}
on a fixed domain, we first notice that
$\varphi(x_1,\cdot)\in H^{1-\eps}_0(I(x_1))$ \rblf{a. e. in $(0,1) \ni x_1$}.
Therefore, \rblf{for almost all $x_1 \in (0,1)$ there exists a sequence of functions}
$\varphi_n \in C^{\infty}_0(I(x_1))$, \rblf{$n=1,2,\ldots$, such that
$\varphi_n \to \varphi(x_1,\cdot)$ in $H^{1-\eps}_0(I(x_1))$ as $n \to \infty$ and}
\[
  \int_0^{x_1} \partial \varphi_n(x_2)\,dx_2 = 0\quad \forall n = 1,2,\ldots.
\]
Using a scaling argument, we deduce that
\[
     \|\partial \varphi_n\|_{H^{-\eps}(I(x_1))} =
     \stack{v\ne 0}{\sup_{v\in H^{\eps}(I(x_1))}}
     \frac{\int_0^{x_1} \partial \varphi_n\, v\, dx_2}{\|v\|_{H^{\eps}(I(x_1))}} \leq
     x_1^{-\frac12+\eps} \stack{\hat v\ne 0}{\sup_{\hat v\in H^{\eps}(0,{1})}}
     \frac{\int_0^{1} \partial \hat \varphi_n\, \hat v\, d\hat x_2}{|\hat v|_{H^{\eps}(0,1)}}.
\]
As $\partial \hat \varphi_n$ \rblf{has zero average}, we can estimate
\beas
     \|\partial \varphi_n\|_{H^{-\eps}(I(x_1))}
     & \leq &
     x_1^{-\frac12+\eps}
     \stack{\hat v\ne 0,\;\int_0^1\hat v=0}{\sup_{\hat v\in H^{\eps}(0,{1})}}
     \frac{\int_0^{1} \partial \hat \varphi_n \hat v\, d\hat x_2}{|\hat v|_{H^{\eps}(0,1)}}
     \\
     & \lesssim &
     x_1^{-\frac12+\eps}
     \stack{\hat v\ne 0,\;\int_0^1\hat v=0}{\sup_{\hat v\in H^{\eps}(0,{1})}}
     \frac{\int_0^{1} \partial \hat \varphi_n \hat v\, d\hat x_2}{\|\hat v\|_{H^{\eps}(0,1)}}\;
     \rblf{ \le x_1^{-\frac12+\eps} \|\partial \hat\varphi_n\|_{H^{-\eps}(0,1)}}.
\eeas
Hence, by Theorem 1.4.4.6 of \cite{Grisvard_85_EPN} we prove that
\beas
     \|\partial \varphi_n\|_{H^{-\eps}(I(x_1))} \lesssim
     x_1^{-\frac12+\eps}
     \rblf{\|\hat \varphi_n\|_{H^{1-\eps}(0,1)} \lesssim
           x_1^{-\frac12+\eps} |\hat \varphi_n|_{H^{1-\eps}(0,1)}}.
\eeas
\rblf{Mapping back to the interval $I(x_1) = (0,x_1)$ we have}
\beas
     \|\partial \varphi_n\|_{H^{-\eps}(I(x_1))} \lesssim
     | \varphi_n|_{H^{1-\eps}(I(x_1))}.
\eeas
\rblf{As $n \to \infty$ we find}
\be \label{alex28/07:1}
     \|\partial_2 \varphi(x_1,\cdot)\|_{H^{-\eps}(I(x_1))} \leq
     C_1 |\varphi(x_1,\cdot)|_{H^{1-\eps}(I(x_1))}\quad
     \rblf{\hbox{a. e. on $(0,1) \ni x_1$}},
\ee
with $C_1>0$ independent of $x_1$.

\rblf{Using estimates (\ref{serge12/05:1}) and (\ref{alex28/07:1}) in (\ref{serge9/05:16})
we arrive at}
\[
  \big|(u_2,\partial_2 \varphi)_{0,\hat T}\big| \lesssim
  \int_0^1| u_2(x_1,\cdot)|_{H^{s}(0,x_1)}
  \|\varphi(x_1,\cdot)\|_{H^{1-\eps}(0,x_1)} \,dx_1.
\]
Then the Cauchy-Schwarz inequality yields
\be\label{serge9/05:17}
   \big|(u_2,\partial_2 \varphi)_{0,\hat T}\big| \lesssim 
   |u_{2}|_{AH_{2}^{s}(\hat T)}
   \Big(\int_0^1
        \|\varphi(x_1,\cdot)\|^2_{H^{1-\eps}(0,x_1)} \,dx_1
   \Big)^\frac12.
\ee
\rblf{The last term on the right-hand side of (\ref{serge9/05:12})
is estimated by using the Cauchy-Schwarz inequality:}
\[
  \int_{\hat T} \div\bu\, \varphi \le \|\div\,\bu\|_{0,\hat T}\, \|\varphi\|_{0,\hat T}.
\]
Using this estimate and \rblf{inequalities (\ref{serge9/05:13}), (\ref{serge9/05:17}) in (\ref{serge9/05:12})},
we obtain (\ref{serge9/05:10}) (for $l{=}1$).
\end{proof}

\begin{corollary} \label{coroserge16/05:2}
\rblf{{\rm (i)} For all  $\bu \in \bH^{s}(\hat T) \cap \bH(\div,\hat T)$ with $s > 1/2$,
we have for $l=1,\,2$}
\[
    \rblf{\|u_l -(\hPi_{\rm RT} \bu)_l\|_{0,\hat T} \lesssim
    |u_l|_{H^{s}(\hat T)} + \|\div\,\bu\|_{0,\hat T}.}
\]
\rblf{{\rm (ii)} For all $\bu \in \bH^{s}(\hat T) \cap \bH(\div,\hat T)$ with $0 < s \le 1/2$,
we have for $l=1,\,2$}
\[
    \|u_l -(\hPi_{\rm RT} \bu)_l\|_{0,\hat T} \lesssim
    |u_l|_{H^{s}(\hat T)}+|u_{l+1}|_{AH_{l+1}^{s}(\hat T)} + \|\div\,\bu\|_{0,\hat T}\quad
    \hbox{\rm (mod. 2).}
\]
\end{corollary}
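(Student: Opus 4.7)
The plan is to reduce the corollary to Theorem~\ref{thm_RT_triangle}, mirroring the argument used for the reference square in Corollary~\ref{coroserge16/05:1}. The key observation is that $\RT_0(\hat T)$ contains all constant vector fields, so $\hPi_{\rm RT}\bc = \bc$ for every $\bc \in {\field{R}}^2$. Consequently, subtracting a constant from the first (or second) component of $\bu$ leaves both the interpolation error $\bu - \hPi_{\rm RT}\bu$ and the divergence $\div\,\bu$ unchanged, while turning the full $H^s$-norm of the affected component into its $H^s$-seminorm via a Friedrichs-type inequality on~$\hat T$.

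Concretely, for $l=1$ I would set $\widetilde\bu := \bu - (\CM_{\hat T} u_1,\, 0)^\top$, where $\CM_{\hat T} u_1 = |\hat T|^{-1}\int_{\hat T} u_1$. Then
\[
  \widetilde\bu - \hPi_{\rm RT}\widetilde\bu = \bu - \hPi_{\rm RT}\bu, \qquad
  \div\,\widetilde\bu = \div\,\bu, \qquad
  \widetilde u_2 = u_2,
\]
and Friedrichs' inequality on $\hat T$ (valid in the fractional-order setting by an interpolation argument or by the Deny--Lions lemma applied to the Sobolev--Slobodeckij scale) yields
\[
  \|\widetilde u_1\|_{H^s(\hat T)} \simeq |u_1|_{H^s(\hat T)}.
\]
Combining the triangle inequality
\[
  \|u_1 - (\hPi_{\rm RT}\bu)_1\|_{0,\hat T}
  \le \|\widetilde u_1\|_{0,\hat T} + \|(\hPi_{\rm RT}\widetilde\bu)_1\|_{0,\hat T}
\]
with the bound of Theorem~\ref{thm_RT_triangle}~(ii) applied to $\widetilde\bu$ then produces statement~(ii) for $l=1$. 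The case $l=2$ is fully symmetric, using $\widetilde\bu := \bu - (0,\, \CM_{\hat T} u_2)^\top$, and statement~(i) follows identically by invoking Theorem~\ref{thm_RT_triangle}~(i) instead of (ii).

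No substantial obstacle is expected: the proof is essentially a one-line reduction to Theorem~\ref{thm_RT_triangle}, and all the nontrivial harmonic-analytic content (handling of the anisotropic seminorms, the density and duality arguments, the treatment of the weak normal trace via Green's formula) has already been done in establishing that theorem. The only delicate point worth stating explicitly is the Friedrichs inequality in the fractional range $0<s\le 1/2$ on $\hat T$, which is standard but should be cited (e.g.\ by interpolation between $L^2$ and $H^1$ Friedrichs, or directly from Grisvard) to keep the exposition self-contained.
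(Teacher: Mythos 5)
Your proposal is correct and coincides with the paper's own proof: the paper proves Corollary~\ref{coroserge16/05:2} with the single remark that it is ``similar to the proof of Corollary~\ref{coroserge16/05:1}'', which is exactly the mean-subtraction reduction you carry out (set $\widetilde\bu = \bu - (\CM_{\hat T}u_l, 0)^\top$ or its symmetric counterpart, observe that interpolation error, divergence, and the complementary component are unchanged, invoke fractional Friedrichs to replace $\|\widetilde u_l\|_{H^s}$ by $|u_l|_{H^s}$, and apply Theorem~\ref{thm_RT_triangle}). No gap; your explicit remark about citing a Friedrichs inequality on the Sobolev--Slobodeckij scale is a sensible bit of extra care, but nothing more is needed.
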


\rblf{The proof of this statement is similar to the proof of Corollary \ref{coroserge16/05:1}.}

\begin{counterex} \label{cntrex_1}
\rblf{Here we provide a counterexample which demonstrates that
for low-regular vector fields
the terms $|u_{l+1}|_{AH_{l+1}^{s}(\hat Q)}$ in~{\rm (\ref{serge9/05:1})}
and $|u_{l+1}|_{AH_{l+1}^{s}(\hat T)}$ in~{\rm (\ref{serge9/05:10})} cannot be omitted.
This is in contrast to the case of sufficiently-regular fields
in~{\rm (\ref{serge9/05:1(i)})} and~{\rm (\ref{serge9/05:10(i)})}
(see also} Lemma~{\rm 3.3} in {\rm \cite{AcostaADL_11_EER}}).
\rblf{In particular}, if we assume that
\be \label{serge9/05:10wrong}
    \|(\hPi_{\rm RT} \bu)_2\|_{0,\hat T} \lesssim
    \|u_2\|_{H^{1/2}(\hat T)} + \|\div\,\bu\|_{0,\hat T}
\ee
for all \rblf{$\bu \in \bH^{\frac12}(\hat T) \cap \bH(\div,\hat T)$},
then we will arrive at a contradiction.
Indeed, inspired by Example {\rm 2.6} of {\rm \cite{Apel_99_AFE}},
we \rblf{define} on $\hat T \times (0,1)$
\[
  v^\eps(x_1,x_2, x_3)= (x_1-1) w^\eps(x_2, x_3)\ \
  \hbox{with }\ \ 
  w^\eps(x_2, x_3)=\min\Big\{1, \eps\log\log\frac{e}{r}\Big\}\ \
  \hbox{for any $\eps>0$}.
\]
Here, $r=(x_2^2+x_3^2)^\frac12$, and $e$ is the Euler number.
Taking $\nabla v^\eps \times \bnu$ on $\hat T$ \rblf{(here $\bnu = (0,0,-1)$)}, we find
a divergence-free vector field
\[
  \bu^\eps(x_1,x_2) = \big((1-x_1) \partial_2 w^\eps(x_2, 0), w^\eps(x_2, 0)\big)^\top.
\]
Simple calculations show that 
$\hPi_{\rm RT} \bu^\eps = (0,1)^\top$, and by trace theorem we have
\beas
     \|(\bu^{\eps})_2\|_{H^{1/2}(\hat T)} = \|w^{\eps}(x_2, 0)\|_{H^{1/2}(\hat T)} \lesssim
     \|w^\eps \|_{H^1(\hat T \times (0,1))}.
\eeas
Since \rblf{$\|w^\eps \|_{H^1(\hat T \times (0,1))} \to 0$ as $\eps \to 0$
(see~{\rm \cite[Example~2.6]{Apel_99_AFE}})},
we \rblf{conclude} that \rblf{$(\bu^{\eps})_2 \to 0$} in $H^\frac12(\hat T)$ \rblf{as $\eps \to 0$}.
This seems to contradict {\rm (\ref{serge9/05:10wrong})}
but not directly because the first component of $\bu^\eps$ is not in $H^\frac12(\hat T)$.
Hence, in order to arrive at a contradiction, we need to
show that if {\rm (\ref{serge9/05:10wrong})} holds for all
\rblf{$\bu \in \bH^{\frac12}(\hat T) \cap \bH(\div,\hat T)$},
then $\bu^\eps$ satisfies {\rm (\ref{serge9/05:10wrong})} (with a constant indepedent of $\eps$).
Indeed, for a fixed $\eps>0$, as \rblf{$w^\eps(\cdot,0) \in H^1(0,1)$},
we can consider a sequence of smooth functions $w_n \in C^{\infty}([0,1])$ such that
\[
  w_n \to w^\eps(\cdot,0)\hbox{ in } H^1(0,1) \hbox{ as } n\to \infty.
\]
Then we define 
\[
  \bu_n(x_1,x_2) = ((1-x_1) \rblf{\partial w_n}(x_2), w_n(x_2))^\top.
\]
\rblf{One has $\bu_n \in \bH^{\frac12}(\hat T)$ and $\div\,\bu_n = 0$}.
\rblf{Moreover,}
$\hPi_{\rm RT} \bu_n \to \hPi_{\rm RT} \bu^{\rblf{\eps}}$ as $n \to \infty$.
\rblf{Therefore,} applying estimate {\rm (\ref{serge9/05:10wrong})} to $\bu_n$
and \rblf{letting $n \to \infty$,}
we conclude that $\bu^{\eps}$ satisfies {\rm (\ref{serge9/05:10wrong})}.
\end{counterex}

\begin{remark}
\rblf{By Counterexample~{\rm \ref{cntrex_1}}} we can easily show that 
a result similar to Lem\-ma~{\rm 3.3} in {\rm \cite{AcostaADL_11_EER}}
for the Nédélec interpolant
\rblf{on the tetrahedron
      $\hat T_3 = \{(x_1,x_2, x_3) \in {\field{R}}^3;\;
                    x_i > 0,\ i=1,2,3 \hbox{ and } 0 < x_1+x_2+x_3 < 1\}$
}
is not valid.
In other words, the anisotropic estimate
\[
    \| (\hPi_{\rm Ned} \bv)_l\|_{0,\hat T_3} \lesssim \|v_l\|_{\bH^1(\hat T_3)} + \|\bcurl\,\bv\|_{0,\hat T_3},\quad
    l=1,2,3
\]
\rblf{does not hold for all  $\bv \in \bH^1(\hat T_3) = \big(H^1(\hat T_3)\big)^3$}.
\end{remark}

\subsection{\rblf{Anisotropic elements}} \label{subsec_RT_real_element}

In this subsection we will denote the functions
on the elements $K$ and $\hat K$ by $\bu$ and $\hat\bu$, respectively.
Analogous notation will be used for coordinates (e.g., $\bx \in K$ and $\hat\bx \in \hat K$)
and for differential operators (e.g., $\div$ and $\hat\div$).

First, let us prove the following auxiliary result.

\begin{lemma} \label{lm_H^{-1/2}-scaling}
Let $K$ be the image of the reference element $\hat K$ ($\hat K = \hat T$ or $\hat K = \hat Q$)
under diagonal scaling with matrix
$B = \bigl( \begin{smallmatrix} h_1 & 0 \\ 0 & h_2 \end{smallmatrix} \bigr)$, where $h_l > 0$.
Then for any $u \in H^{-1/2}(K)$ there holds
\be \label{H^{-1/2}-scaling}
    \|\hat u\|_{H^{-1/2}(\hat K)} \lesssim
		\frac{\max\,\{h_1^{1/2},\,h_2^{1/2}\}}{h_1 h_2} \|u\|_{H^{-1/2}(K)},
\ee
where $\hat\bu(\hat\bx) = \bu(B \hat\bx)$, $\hat\bx = (\hat x_1, \hat x_2) \in \hat K$.
\end{lemma}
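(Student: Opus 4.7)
The plan is to exploit the duality $H^{-1/2}(\hat K) = \big(H^{1/2}(\hat K)\big)'$ and reduce~(\ref{H^{-1/2}-scaling}) to a forward scaling bound at the $H^{1/2}$-level. For any test function $\hat v \in H^{1/2}(\hat K)$, define its pullback $v(\bx) := \hat v(B^{-1}\bx)$ on $K$. The change of variables $\bx = B\hat\bx$ yields
\[
  \langle \hat u, \hat v \rangle_{\hat K} = \frac{1}{h_1 h_2}\,\langle u, v \rangle_{K},
\]
so that $|\langle \hat u, \hat v\rangle_{\hat K}| \le (h_1 h_2)^{-1}\,\|u\|_{H^{-1/2}(K)}\,\|v\|_{H^{1/2}(K)}$. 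Taking the supremum over $\hat v \ne 0$ reduces the lemma to the forward estimate
\[
  \|v\|_{H^{1/2}(K)} \lesssim \max\{h_1^{1/2},\, h_2^{1/2}\}\,\|\hat v\|_{H^{1/2}(\hat K)}. \qquad(\ast)
\]

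To establish $(\ast)$ I would extend $\hat v$ to the whole plane. Fix a bounded extension operator $\hat E\colon H^{1/2}(\hat K) \to H^{1/2}({\field{R}}^2)$ (its norm depends only on the fixed reference element $\hat K$), set $\hat V := \hat E \hat v$, and transport it by $V(\bx) := \hat V(B^{-1}\bx)$. Since $V|_K = v$, one has $\|v\|_{H^{1/2}(K)} \le \|V\|_{H^{1/2}({\field{R}}^2)}$. Expressing the $H^{1/2}({\field{R}}^2)$-norm via the Fourier transform, using that the Fourier transform of $V$ equals $h_1 h_2$ times the Fourier transform of $\hat V$ precomposed with $B^{\top}$, and substituting $\bnu = B^{\top}\bxi$ yields
\[
  \|V\|_{H^{1/2}({\field{R}}^2)}^2 = h_1 h_2 \int_{{\field{R}}^2} \Bigl(1 + \frac{\nu_1^2}{h_1^2} + \frac{\nu_2^2}{h_2^2}\Bigr)^{1/2} |\mathcal F \hat V(\bnu)|^2\, d\bnu,
\]
where $\mathcal F$ denotes the Fourier transform. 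Bounding the weight by $\min\{h_1, h_2\}^{-1}(1+|\bnu|^2)^{1/2}$---legitimate because the $h_i$ are bounded by a fixed constant on any admissible mesh---gives $\|V\|_{H^{1/2}({\field{R}}^2)}^2 \lesssim \max\{h_1, h_2\}\,\|\hat V\|_{H^{1/2}({\field{R}}^2)}^2$, and continuity of $\hat E$ completes~$(\ast)$.

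The main obstacle is to avoid any loss of constant due to the anisotropy of $K$ itself. A naive interpolation argument, writing $H^{1/2}(K) = [L^2(K), H^1(K)]_{1/2}$ and interpolating the elementary $L^2$ and $H^1$ scalings of the pullback operator $\hat v \mapsto v$, would contaminate the bound by the norm-equivalence constant between $H^{1/2}(K)$ and the interpolation space, which can deteriorate with the aspect ratio $h_2/h_1$. Likewise, a direct Slobodeckij-seminorm computation on $K$ uses only $|B\hat\bz| \ge \min\{h_1,h_2\}\,|\hat\bz|$ and loses an extra factor $\max\{h_1, h_2\}/\min\{h_1, h_2\}$. Performing the extension on the fixed reference domain $\hat K$ and then invoking the scale-invariant Fourier norm on ${\field{R}}^2$ circumvents both pitfalls.
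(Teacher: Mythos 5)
Your overall plan — reduce by duality to a forward scaling bound at the $H^{1/2}$ level — is exactly the paper's strategy, but you have committed to the wrong duality. The paper treats $H^{-1/2}(\hat K)$ as the dual of $\tilde H^{1/2}(\hat K)$ (functions whose zero-extension to ${\field{R}}^2$ lies in $H^{1/2}({\field{R}}^2)$), not of $H^{1/2}(\hat K)$; this is stated explicitly in the first line of the paper's proof, where the supremum runs over $\hat v \in \tilde H^{1/2}(\hat K)$. This matters: in the paper's convention your intermediate bound $|\langle u,v\rangle_K|\le \|u\|_{H^{-1/2}(K)}\|v\|_{H^{1/2}(K)}$ is simply false — the correct right-hand factor is $\|v\|_{\tilde H^{1/2}(K)}$, and your pullback $v$ of a generic $\hat v\in H^{1/2}(\hat K)$ need not lie in $\tilde H^{1/2}(K)$ at all. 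Moreover the two duals are not uniformly comparable on a family of anisotropic elements (the comparison constant degrades with the aspect ratio $h_2/h_1$, which is precisely the quantity the lemma is tracking), so proving the $(H^{1/2})'$ statement does not give the $(\tilde H^{1/2})'$ statement.

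The good news is that switching to the correct duality makes your argument simpler, not harder: for $\hat v\in \tilde H^{1/2}(\hat K)$ the zero-extension $\hat V$ lies in $H^{1/2}({\field{R}}^2)$ with $\|\hat V\|_{H^{1/2}({\field{R}}^2)}=\|\hat v\|_{\tilde H^{1/2}(\hat K)}$, and its anisotropic scaling $V(\bx)=\hat V(B^{-1}\bx)$ is precisely the zero-extension of $v$, so the extension operator $\hat E$ is unnecessary. Your Fourier computation then yields $\|v\|_{\tilde H^{1/2}(K)}^2\lesssim \max\{h_1,h_2\}\,\|\hat v\|_{\tilde H^{1/2}(\hat K)}^2$ directly, which, combined with the change of variables in the duality pairing, gives~(\ref{H^{-1/2}-scaling}). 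This is a legitimate alternative to the paper's route. The paper instead computes the $L^2$ and $H^1_0$ scalings explicitly and interpolates between them; note that your criticism of the interpolation argument addresses the pair $[L^2(K),H^1(K)]$, whereas the paper interpolates between $L^2(K)$ and $H^1_0(K)$, for which the zero-extension keeps the relevant constants domain-independent, so that objection does not apply to the paper's argument.
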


\begin{proof}
By the definition of the dual norm
\be \label{H^{-1/2}-scaling_1}
    \|\hat u\|_{H^{-1/2}(\hat K)} =
		\stack{\hat v \in \tilde H^{1/2}(\hat K)}{\hbox{sup}}
		\frac{(\hat u,\hat v)_{0,\hat K}}{\|\hat v\|_{\tilde H^{1/2}(\hat K)}}.
\ee
We now estimate the norm $\|\hat v\|_{\tilde H^{1/2}(\hat K)}$.
If $\hat v \in H^1(\hat K)$, then diagonal scaling yields
\beas
     & \|\hat v\|^2_{0,\hat K} = (h_1 h_2)^{-1}\, \|v\|^2_{0,K}, &
		 \\[3pt]
		 & \|\hat\partial_1 \hat v\|^2_{0,\hat K} \simeq h_1 h_2^{-1}\, \|\partial_1 v\|^2_{0,K},\quad
		 \|\hat\partial_2 \hat v\|^2_{0,\hat K} \simeq h_1^{-1} h_2\, \|\partial_2 v\|^2_{0,K}. &
\eeas
Therefore,
\[
  \|\hat v\|^2_{H^1_0(\hat K)} =
	\|\hat\partial_1 \hat v\|^2_{0,\hat K} + \|\hat\partial_2 \hat v\|^2_{0,\hat K} \gtrsim
  \frac{\min\,\{h_1^2,h_2^2\}}{h_1 h_2}	\|v\|^2_{H^1_0(K)},
\]
and by interpolation between $L^2$ and $H^1_0$ we find that
\be \label{H^{-1/2}-scaling_2}
    \|\hat v\|^2_{\tilde H^{1/2}(\hat K)} \gtrsim
		\frac{\min\,\{h_1,h_2\}}{h_1 h_2}	\|v\|^2_{\tilde H^{1/2}(K)} =
		\frac{1}{\max\,\{h_1,h_2\}}	\|v\|^2_{\tilde H^{1/2}(K)}\quad
    \forall \hat v \in \tilde H^{1/2}(\hat K).
\ee
Since $(\hat u,\hat v)_{0,\hat K} = (h_1 h_2)^{-1}\,(u,v)_{0,K}$,
we use (\ref{H^{-1/2}-scaling_2}) in (\ref{H^{-1/2}-scaling_1}) to obtain
inequality (\ref{H^{-1/2}-scaling}).
\end{proof}

Now, we are in a position to prove the stability result and
the corresponding error estimate for the Raviart-Thomas interpolation
on anisotropic elements.

\begin{theorem} \label{thm_RT_diag_scaling}
Let $K$ be either the triangle $T$ with vertices
$(0,0)$, $(h_1,0)$, $(h_1,h_2)$,
or the rectangle $Q$ with vertices
$(0,0)$, $(h_1,0)$, $(0,h_2)$, $(h_1,h_2)$, where $h_l > 0$.
Denote $h_{\max} := \max\,\{h_1,h_2\}$.
Then for any $\bu \in \bH^{1/2}(K)$ with $\div\, \bu \in {\field{R}}$ there holds for $l=1,\,2$
\be \label{RT_diag_scaling_1}
    \|(\Pi_{\rm RT} \bu)_l\|^2_{0,K}
    \lesssim
    \|u_l\|^2_{0,K} +
    \frac{h_{\max}^3}{h_1 h_2}
    \left(
          |u_l|^2_{H^{1/2}(K)} + |u_{l+1}|^2_{AH_{l+1}^{1/2}(K)} +
          \|\div\, \bu\|^2_{H^{-1/2}(K)}
    \right)
    \ \hbox{\rm (mod. $2$)}
\ee
and
\be \label{RT_diag_scaling_2}
     \|u_l - (\Pi_{\rm RT} \bu)_l\|^2_{0,K}
     \lesssim
     \frac{h_{\max}^3}{h_1 h_2}
     \left(
           |u_l|^2_{H^{1/2}(K)} + |u_{l+1}|^2_{AH_{l+1}^{1/2}(K)} +
           \|\div\, \bu\|^2_{H^{-1/2}(K)}
     \right)
     \ \hbox{\rm (mod. $2$)}.
\ee
\end{theorem}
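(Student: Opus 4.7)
The plan is to reduce both inequalities to the corresponding estimates on the reference element via the Piola transform, then carefully track the scaling of every seminorm in play.

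First I would associate with the scaling $\bx = B\hat\bx$ the Piola transform
\[
  \hat\bu(\hat\bx) := |\det B|\,B^{-1}\bu(B\hat\bx),
  \qquad
  \hat u_l(\hat\bx) = h_{l+1}\,u_l(B\hat\bx)\ \ \hbox{\rm (mod. $2$)},
\]
which satisfies $\hat\div\,\hat\bu = h_1 h_2\,(\div\,\bu)\circ B \in {\field{R}}$ and commutes with Raviart-Thomas interpolation, $\widehat{\Pi_{\rm RT}\bu} = \hat\Pi_{\rm RT}\hat\bu$. Since $\hat\bu \in \bH^{1/2}(\hat K)$ and $\hat\div\,\hat\bu$ is a real constant, applying Corollary~\ref{coroalex25/06:1} on $\hat K = \hat Q$ (resp.\ Corollary~\ref{coroserge16/05:2}~(ii) on $\hat K = \hat T$) with $s = 1/2$, squaring, and using the anisotropic characterisation~(\ref{serge9/05:4}), yields
\[
  \|(\hat\Pi_{\rm RT}\hat\bu)_l\|^2_{0,\hat K} \lesssim
  \|\hat u_l\|^2_{0,\hat K} + |\hat u_l|^2_{H^{1/2}(\hat K)} + |\hat u_{l+1}|^2_{AH^{1/2}_{l+1}(\hat K)} + \|\hat\div\,\hat\bu\|^2_{0,\hat K},
\]
the last term being vacuous in the rectangular case; the corresponding error estimate drops the $\|\hat u_l\|^2_{0,\hat K}$ contribution.

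Next I would pull every quantity back to $K$ by direct change of variables. For $l=1$ one checks $\|\hat u_1\|^2_{0,\hat K} = (h_2/h_1)\,\|u_1\|^2_{0,K}$ and $\|(\hat\Pi_{\rm RT}\hat\bu)_1\|^2_{0,\hat K} = (h_2/h_1)\,\|(\Pi_{\rm RT}\bu)_1\|^2_{0,K}$; exploiting the pointwise inequality $|\hat\bx - \hat\boldy|^2 \ge h_{\max}^{-2}|\bx - \boldy|^2$ one obtains
\[
  |\hat u_1|^2_{H^{1/2}(\hat K)} \lesssim \frac{h_{\max}^3}{h_1^2}\,|u_1|^2_{H^{1/2}(K)},
\]
while a one-dimensional change of variable in the anisotropic seminorm gives $|\hat u_2|^2_{AH^{1/2}_2(\hat K)} = h_1\,|u_2|^2_{AH^{1/2}_2(K)}$. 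Multiplying the reference-element bound by $h_1/h_2$, the $L^2$ contribution becomes $\|u_1\|^2_{0,K}$, the $H^{1/2}$ contribution picks up the prefactor $h_{\max}^3/(h_1 h_2)$, and the anisotropic contribution acquires prefactor $h_1^2/h_2$, which is bounded by $h_{\max}^3/(h_1 h_2)$ because $h_1 \le h_{\max}$. The case $l=2$ is entirely symmetric.

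The main obstacle is to translate the divergence term $\|\hat\div\,\hat\bu\|_{0,\hat K}$ back to $K$ with the correct anisotropic scaling. Since $\div\,\bu$ equals a constant $c \in {\field{R}}$ on $K$, the transformed divergence is the constant $h_1 h_2\,c$ and, on the fixed reference element, one trivially has $\|1\|_{0,\hat K} \simeq \|1\|_{H^{-1/2}(\hat K)}$. Combining this equivalence with Lemma~\ref{lm_H^{-1/2}-scaling} applied to the constant function $1$ yields
\[
  \|\hat\div\,\hat\bu\|^2_{0,\hat K} \lesssim h_{\max}\,\|\div\,\bu\|^2_{H^{-1/2}(K)},
\]
so that after multiplication by $h_1/h_2$ this contribution is at most $(h_1 h_{\max}/h_2)\,\|\div\,\bu\|^2_{H^{-1/2}(K)} \le (h_{\max}^3/(h_1 h_2))\,\|\div\,\bu\|^2_{H^{-1/2}(K)}$, using once more $h_1 \le h_{\max}$. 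Collecting all the estimates yields~(\ref{RT_diag_scaling_1}); repeating the argument with the error bounds from Corollary~\ref{coroalex25/06:1} on $\hat Q$ and Corollary~\ref{coroserge16/05:2}~(ii) on $\hat T$ (which lack the $\|\hat u_l\|^2_{0,\hat K}$ term) gives~(\ref{RT_diag_scaling_2}).
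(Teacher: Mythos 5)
Your proposal is correct and follows essentially the same route as the paper's own proof: transform to the reference element via the Piola map, invoke the componentwise reference-element bounds (the paper uses Theorem~\ref{thm_RT_triangle}~(ii) and Corollary~\ref{coroserge16/05:2}~(ii) for the triangle and the square analogues, rather than Corollary~\ref{coroserge16/05:2}~(ii) for the stability bound as you cite, but the triangle inequality bridges the two), compute the scaling of the $L^2$ norm, Gagliardo seminorm and anisotropic seminorm exactly as the paper does, convert the constant divergence term from $L^2$ to $H^{-1/2}$ on the reference element and scale via Lemma~\ref{lm_H^{-1/2}-scaling}, and then absorb the prefactors $h_1^2/h_2$ and $h_1 h_{\max}/h_2$ into $h_{\max}^3/(h_1h_2)$. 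All key identities and inequalities you use match the paper's line by line.
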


\begin{proof}
We consider only the case of the triangle, $K = T$.
The proof is similar in the case $K = Q$.

We use the Piola transformation to define
$\hat\bu \in \bH^{1/2}(\hat T) \cap \bH(\div,\hat T)$ on the reference triangle $\hat T$ as follows:
\[
  \hat\bu(\hat\bx) = h_1 h_2 B^{-1} \bu(B \hat\bx)\quad
  \hbox{with}\quad
  B = \begin{pmatrix} h_1 & 0 \\ 0 & h_2 \end{pmatrix}.
\]
Then we have 
\beas
     \|\hat u_1\|^2_{0,\hat T}
     & = &
     h_2^2\, (h_1 h_2)^{-1}\, \| u_1\|^2_{0,T} =
     h_2\, h_1^{-1}\, \|u_1\|^2_{0,T},
     \\[5pt]
     |\hat u_1|^2_{H^{1/2}(\hat T)}
     & = &
     \int_{\hat T} \int_{\hat T}
     \frac{|\hat u_1(\hat\bx) - \hat u_1(\hat\boldy)|^2}{(|\hat x_1 - \hat y_1|^2 + |\hat x_2 - \hat y_2|^2)^{3/2}}
     d\hat\bx\,d\hat\boldy
     \\[3pt]
     & = &
     h_2^2\, (h_1 h_2)^{-2}\,
     \int_{T} \int_{T}
     \frac{|u_1(\bx) - u_1(\boldy)|^2}
          {(h_1^{-2}|x_1 - y_1|^2 + h_2^{-2}|x_2 - y_2|^2)^{3/2}}
     d\bx\,d\boldy
     \\[3pt]
     & \le &
     h_1^{-2}\, h_{\max}^3\, |u_1|^2_{H^{1/2}(T)},
\eeas
and
\beas
     |\hat u_2|^2_{AH_2^{1/2}(\hat T)}
     & = &
     \int_0^1 |\hat u_2(\hat x_1,\cdot)|^2_{H^{1/2}(0,\hat x_1)}\,d\hat x_1
     \\[3pt]
     & = &
     h_1^{-1}\, \int_0^{h_1}
     \Bigg[
           \int_0^{h_2 x_1 / h_1} \int_0^{h_2 x_1 / h_1}
           \frac{h_1^2 |u_2(x_1,x_2) - u_2(x_1,y_2)|^2}
                {h_2^{-2}|x_2 - y_2|^2}\,
           \frac{dx_2\,dy_2}{h_2^2}
     \Bigg]
     dx_1
     \\[3pt]
     & = &
     h_1\, |u_2|^2_{AH_2^{1/2}(T)}.
\eeas
Furthermore, the standard properties of the Piola transformation yield
\[
  \Pi_{\rm RT}\bu(\bx) =
  \frac{1}{h_1 h_2} B \hat \Pi_{\rm RT} \hat\bu(\hat\bx) =
  \begin{pmatrix} 1/h_2 & 0 \\ 0 & 1/h_1 \end{pmatrix} \hat \Pi_{\rm RT} \hat\bu(\hat\bx)\quad\
  \hbox{and}\quad\
  \hat\div\, \hat\bu = {h_1 h_2}\, \div\, \bu \in {\field{R}}.
\]
Therefore, applying Theorem~\ref{thm_RT_triangle}~(ii)
and Lemma~\ref{lm_H^{-1/2}-scaling}
and using the fact that $\hat\div\, \hat\bu \in {\field{R}}$
(hence, $\|\hat\div\, \hat\bu\|_{0,\hat T} \simeq \|\hat\div\, \hat\bu\|_{H^{-1/2}(\hat T)}$), we obtain
\beas
     \|(\Pi_{\rm RT} \bu)_1\|^2_{0,T}
     & = &
     h_2^{-2} h_1 h_2\, \|(\hat \Pi_{\rm RT} \hat\bu)_1\|^2_{0,\hat T}
     \nonumber
     \\[3pt]
     & \lesssim &
     h_1 h_2^{-1}
     \left(
           \|\hat u_1\|^2_{0,\hat T} + |\hat u_1|^2_{H^{1/2}(\hat T)} +
           |\hat u_2|^2_{AH_2^{1/2}(\hat T)} + \|\hat\div\, \hat\bu\|^2_{0,\hat T}
     \right)
     \nonumber
     \\[3pt]
     & \lesssim &
     \|u_1\|^2_{0,T} +
     \frac{h_{\max}^3}{h_1 h_2} |u_1|^2_{H^{1/2}(T)}
     +\, \frac{h_1^2}{h_{2}} |u_{2}|^2_{AH_{2}^{1/2}(T)}
     + \frac{h_1}{h_2}\, h_{\max} \|\div\, \bu\|^2_{H^{-1/2}(T)}.
\eeas
Recalling that $h_{\max} = \max\,\{h_1,h_2\}$, it is easy to see that
\[
  \frac{h_1^2}{h_{2}} \le \frac{h_1}{h_2}\, h_{\max} \le \frac{h_{\max}^3}{h_1 h_2},
\]
and then inequality (\ref{RT_diag_scaling_1}) follows (for $l=1$ and $K = T$).

Arguing as above and using Corollary \ref{coroserge16/05:2}~(ii) instead of Theorem~\ref{thm_RT_triangle}~(ii)
we establish the error estimate in (\ref{RT_diag_scaling_2}) for $l=1$.
The proof is analogous in the case $l=2$.
\end{proof}

We can now estimate the $\bL^2$-error of the Raviart-Thomas interpolation
on the graded mesh $\Delta_h^\beta$ on $\G$.
The specific estimate that we need is for $\bH^{1/2}$-regular vector fields with discrete divergence.

\begin{lemma} \label{lem_RT-error}
For any $\bu \in \bH^{1/2}_{\perp}(\G)$ such that $\divg\bu \in \divg\bX_h$
there holds
\be \label{eq:RT-error}
    \|\bu - \Pi_{\rm RT}\bu\|_{0,\G} \lesssim
    h^{1-\beta/2}\, \|\bu\|_{\bH^{1/2}_{\perp}(\G)}.
\ee
\end{lemma}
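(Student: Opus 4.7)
The plan is to apply the local Raviart-Thomas interpolation error estimate (\ref{RT_diag_scaling_2}) of Theorem~\ref{thm_RT_diag_scaling} on each cell of the graded mesh $\Delta_h^\beta$ and to sum the resulting bounds. The hypothesis $\divg\,\bu \in \divg\,\bX_h$ is crucial: since the lowest-order Raviart-Thomas elements have constant divergence on each cell, $\divg\,\bu$ is piecewise constant on $\Delta_h^\beta$, so the condition $\div\,\bu \in \field{R}$ required by Theorem~\ref{thm_RT_diag_scaling} holds on every $K \in \Delta_h^\beta$.

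The central quantitative step is a uniform bound on the anisotropy factor $h_{\max}^3/(h_1 h_2)$ appearing in (\ref{RT_diag_scaling_2}). Pulling back to one of the three tensor-product blocks covering a face of $\G$, I would index an element as $K_{i,j}$ with $(i,j) \in \{0,\ldots,N-1\}^2$ and side lengths
\[
  h_l(0) \simeq N^{-\beta},\qquad h_l(m) \simeq m^{\beta-1} N^{-\beta}\ \ (m \ge 1,\ l=1,2).
\]
A short case analysis, separating the long, thin elements adjacent to an edge of $\G$ (where $i=0$ or $j=0$) from the more balanced interior cells, then yields
\[
  \frac{h_{\max}^3}{h_1 h_2} \lesssim h^{2-\beta}
\]
uniformly in $(i,j)$ for every $\beta \ge 1$, with sharpness attained precisely on the edge elements. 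This is exactly where the exponent $1 - \beta/2$ in (\ref{eq:RT-error}) originates.

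With the uniform anisotropy bound in hand, I would apply (\ref{RT_diag_scaling_2}) componentwise on each $K$, pull out the factor $h^{2-\beta}$, and sum. The local Slobodeckij seminorms telescope by monotonicity of the integration domain into the global $|u_l|^2_{H^{1/2}(\G)}$, and the anisotropic seminorms $|u_{l+1}|^2_{AH^{1/2}_{l+1}(K)}$ sum, via (\ref{serge9/05:11}) applied blockwise on each face, to a quantity dominated by $\|\bu\|^2_{\bH^{1/2}_\perp(\G)}$ through the continuous embedding $\bH^{1/2}_\perp(\G) \hookrightarrow \bH^{1/2}(\G)$.

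I expect the most delicate remaining point to be the divergence contribution $\sum_K \|\divg\,\bu\|^2_{H^{-1/2}(K)}$, since local $H^{-1/2}$-norms do not sum directly to the global one. The plan is to exploit that $\divg\,\bu$ is piecewise constant: on the reference element, constants lie in $H^{-1/2}(\hat K) \cap L^2(\hat K)$ with equivalent norms, and Lemma~\ref{lm_H^{-1/2}-scaling} together with the Piola scaling used in the proof of Theorem~\ref{thm_RT_diag_scaling} allows each local $\|\divg\,\bu\|^2_{H^{-1/2}(K)}$ to be rewritten as an element-weighted $L^2$-quantity. After absorbing the resulting mesh factors into the $h^{2-\beta}$ prefactor and using the global control $\|\divg\,\bu\|_{H^{-1/2}(\G)} \le \|\bu\|_{\bX} \lesssim \|\bu\|_{\bH^{1/2}_\perp(\G)}$, a square root of the assembled estimate yields (\ref{eq:RT-error}).
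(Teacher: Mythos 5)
Your proposal follows the same route as the paper: apply the local anisotropic error estimate of Theorem~\ref{thm_RT_diag_scaling} on each element of the pulled-back graded mesh on $\hat T$ (and $\hat Q$), use the uniform bound $h_{\max}^3/(h_1 h_2) \lesssim h^{2-\beta}$ (the paper writes this as $h_i^2 h_j^{-1} \lesssim h^{2-\beta}$ for the element $K_{ij}$), sum over elements using the superposition argument for the anisotropic seminorms (exactly the paper's (\ref{eq:superposition})) together with superadditivity of the $H^{1/2}$-seminorm, patch blocks and faces, and finish with the continuity of $\divg\colon \bH^{1/2}_{\perp}(\G) \to H^{-1/2}(\G)$.

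The one place you depart from the paper is the divergence term. The paper simply invokes ``standard superadditivity'' of the local $H^{-1/2}$-norms, i.e.\ $\sum_{K}\|\divg\bu\|^2_{H^{-1/2}(K)} \lesssim \|\divg\bu\|^2_{H^{-1/2}(\hat T)}$, where the local norms are those dual to $\tilde H^{1/2}(K)$ as in Lemma~\ref{lm_H^{-1/2}-scaling}; this is the standard tool in the $h$-BEM literature and closes the argument in one line. You instead distrust this superadditivity and propose converting each local $\|\divg\bu\|^2_{H^{-1/2}(K)}$ back into an element-weighted $L^2$-quantity via Lemma~\ref{lm_H^{-1/2}-scaling} and Piola scaling, then ``absorbing the mesh factors''. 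But that is exactly undoing the step that produced the $H^{-1/2}$-norm in Theorem~\ref{thm_RT_diag_scaling}, and what remains is a sum of element-weighted $L^2$-norms of the piecewise-constant $\divg\bu$ on a \emph{graded} (hence non-quasi-uniform) mesh; you do not say how this quantity is dominated by $h^{2-\beta}\,\|\divg\bu\|^2_{H^{-1/2}(\G)}$. Any such comparison would itself amount to an inverse estimate for piecewise constants on the graded mesh or a re-proof of the superadditivity you were trying to avoid, so this portion of your sketch is genuinely incomplete; the paper's direct use of the superadditivity of the (localised) $H^{-1/2}$-norm is the cleaner and intended argument here.
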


\begin{proof}
Let $F$ be a face of $\G$,
and let $T_F \subset F$ be a triangular block of elements, see Figure~\ref{fig_1}
(the arguments are analogous for the parallelogram block of elements $Q_F$).
The triangle $T_F$ is mapped onto the unit triangle $\hat T$ by the affine transformation
which is independent of the mesh parameter $h$.
Let us first establish the error estimate for the Raviart-Thomas interpolation
on the unit triangle $\hat T$ partitioned into elements as shown in Figure~\ref{fig_1}.

The graded mesh on $\hat T$ comprises the quadrilaterals $K_{ij} = I_i \times I_j$
($i,j = 1,\ldots,N,\ i>j$) isomorphic to $(0,h_i) \times (0,h_j)$ with $h_i \ge h_j$
and the triangles $K_{ii}$ isomorphic to the triangle with vertices $(0,0)$, $(0,h_i)$, $(h_i,h_i)$.
Applying error estimates from Theorem~\ref{thm_RT_diag_scaling} on each element $K_{ij}$ ($i \ge j$),
we have for $l = 1,\,2$:
\[
  \|u_l - (\Pi_{\rm RT} \bu)_l\|^2_{0,K_{ij}}
  \lesssim
  {h_i^2 h_j^{-1}}
  \left(
        |u_l|^2_{H^{1/2}(K_{ij})} + |u_{l+1}|^2_{AH_{l+1}^{1/2}(K_{ij})} +
        \|\div\,\bu\|^2_{H^{-1/2}(K_{ij})}
  \right) \ \hbox{\rm (mod. $2$)}.
\]
Summing these estimates over all elements in $\hat T$ and recalling that
$h_i^2 h_j^{-1} \lesssim h^{2-\beta}$ for $1 \le i,j \le N$, we obtain
\be \label{eq:RT-error_aux1}
    \|\bu - \Pi_{\rm RT}\bu\|^2_{0,\hat T} \lesssim
    h^{2-\beta}\,
    \stack{i \ge j}{\sum\limits_{i,j=1}^{N}}
    \bigg(
          |\bu|^2_{\bH^{1/2}(K_{ij})} +
          \sum\limits_{l=1}^{2} |u_l|^2_{AH_l^{1/2}(K_{ij})} +
          \|\div\, \bu\|^2_{H^{-1/2}(K_{ij})}
    \bigg).
\ee
Note that by standard superposition argument
\bea
     \sum\limits_{i=1}^{N} \sum\limits_{j=1}^{i}
     |u_2|^2_{AH_2^{1/2}(K_{ij})}
     & \lesssim &
     \sum\limits_{i=1}^{N}
     \int_{I_i} |u_2(x_1,\cdot)|^2_{H^{1/2}(0,x_1)}\,dx_1
     \nonumber
     \\[3pt]
     & = &
     \int_0^1 |u_2(x_1,\cdot)|^2_{H^{1/2}(0,x_1)}\,dx_1 =
     |u_2|^2_{AH_2^{1/2}(\hat T)} \overset{\text{(\ref{serge9/05:11})}}{\lesssim}
     \|u_2\|^2_{H^{1/2}(\hat T)},\qquad\quad
     \label{eq:superposition}
\eea
and similarly for $u_1$.
Hence, using standard superadditivity properties of the $H^{1/2}$-semi\-norm
and the $H^{-1/2}$-norm, we deduce from~(\ref{eq:RT-error_aux1}) the following
error estimate on $\hat T$:
\[
    \|\bu - \Pi_{\rm RT}\bu\|_{0,\hat T} \lesssim
    h^{1-\beta/2}
    \left(\|\bu\|_{\bH^{1/2}(\hat T)} + \|\div\,\bu\|_{H^{-1/2}(\hat T)}\right).
\]
Applying now the Piola transformation associated with the mapping $T_F \to \hat T$,
patching together all individual blocks of elements on all faces of $\G$,
and using the superadditivity of $H^{1/2}$- and $H^{-1/2}$-norms (as the functions
of subdomains), we obtain
\be \label{eq:RT-error_aux2}
    \|\bu - \Pi_{\rm RT}\bu\|_{0,\G} \lesssim
    h^{1-\beta/2}
    \left(\|\bu\|_{\bH^{1/2}_{-}(\G)} + \|\divg\bu\|_{H^{-1/2}(\G)}\right)
\ee
(here and below we use the space $\bH^{s}_{-}(\G)$, $s>0$, which is defined
in a piecewise fashion by localisation to each face of $\G$,
with the norm $\|\bu\|^2_{\bH^{s}_{-}(\G)} := \sum_{F \subset \G} \|\bu|_{F}\|^2_{\bH^{s}(F)}$).

Inequality (\ref{eq:RT-error}) follows from~(\ref{eq:RT-error_aux2})
due to the continuity property of $\divg{:}\; \bH^{1/2}_{\perp}(\G) \to H^{-1/2}(\G)$
(see~\cite[Section~4.2]{BuffaC_01_TII}).
\end{proof}

\section{Discrete decomposition and the gap property} \label{sec_discrete_decomp}
\setcounter{equation}{0}

\rev{Following the ideas from~\cite{BuffaC_03_EFI} and~\cite[Section~9.1]{BuffaH_03_GBE}, we can use}
the Raviart-Thomas interpolation operator $\Pi_{\rm RT}$ to define \rev{the} discrete
counterparts of $\bV$ and $\bW$ in (\ref{decomp})
(e.g., we can set $\bV_h := \Pi_{\rm RT}(\sfR(\bX_h))$, where $\sfR$ is the regularised
projector introduced in Section~\ref{sec_decomp}).
However, as \rev{it} follows from the results in Section~\ref{sec_rt_stab},
the Raviart-Thomas interpolation of low-regular vector fields
on graded meshes $\Delta_h^{\beta}$ is only stable (with respect to the $\bL^2$-norm)
when $\beta < 2$.
Since the definition of the energy space $\bX$ for the EFIE involves
the dual space $\bH^{-1/2}_{\|}(\G)$ with a weaker norm than $\|\cdot\|_{0,\G}$,
we can relax the restriction on the grading parameter $\beta$
by employing a different projection onto the boundary element space
\rev{and using a duality argument on individual faces of $\G$.
This approach was successfully used
by Hiptmair and Schwab in~\cite[Section~8]{HiptmairS_02_NBE}
and by Buffa and Christiansen in~\cite[Section~4.2.2]{BuffaC_03_EFI}
in the context of the $h$-BEM with shape-regular meshes for the EFIE
(see~\cite{BespalovH_10_NpB, BespalovH_10_hpA} for applications of these
ideas to the analysis of the $p$-BEM and the $hp$-BEM with quasi-uniform meshes).
We will demonstrate below that using these techniques together with
stability properties and error estimates for the Raviart-Thomas interpolation
on anisotropic elements, one can design a stable discrete decomposition
of the boundary element space on $\Delta_h^{\beta}$ and prove the corresponding gap property~(\ref{gap})
for any $\beta < 3$.}

The construction of \rev{the desired projection operator} is technically involved.
Therefore, we formulate here the final result relevant to our discussion
and \rev{give a detailed proof in the next section}.
In the Proposition below, $\Pi_0$ denotes the $L^2(\G)$-projection onto the space of
piecewise constant functions over the mesh $\Delta_h^{\beta}$,
and $\bH^{-1/2}_{-}(\G)$ denotes the dual space of $\bH^{1/2}_{-}(\G)$
(with $\bL^2_{\rm t}(\G)$ as pivot space).

\begin{prop} \label{prop_Qh-projector}
There exists an operator $\CQ_h: \bH_{-}^s(\G) \cap \bH(\divg,\G) \to \bX_h$
($s > 0$) such that
\be \label{Qh_commut}
    \divg \circ \CQ_h = {\Pi_0} \circ \divg,
\ee
and for any $\eps > 0$
\be \label{Qh-error}
    \|\bu - \CQ_h\bu\|_{\bH^{-1/2}_{-}(\G)} \lesssim
    h^{1/2-\eps} \|\bu - \Pi_{\rm RT}\bu\|_{\bH(\divg,\G)}\quad
    \forall\,\bu \in \bH_{-}^s(\G) \cap \bH(\divg,\G).
\ee
\end{prop}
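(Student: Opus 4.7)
The plan is to construct $\CQ_h$ as a Raviart-Thomas-based quasi-interpolant whose divergence commutes with $\Pi_0$ by design, and whose error in the weaker norm $\bH^{-1/2}_{-}(\G)$ is controlled by an Aubin--Nitsche-type duality argument. The scaffolding is the one used for the EFIE on shape-regular meshes by Hiptmair and Schwab in~\cite{HiptmairS_02_NBE} and by Buffa and Christiansen in~\cite[Section~4.2.2]{BuffaC_03_EFI}; the novelty here is to carry it through with the anisotropic stability results of Section~\ref{sec_rt_stab}.

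First I would set $\CQ_h \bu := \Pi_{\rm RT}\bu + \bz_h(\bu)$, where $\bz_h(\bu) \in \bX_h$ is a divergence-free correction, defined locally face-by-face on $\G$ by imposing the $\bL^2$-orthogonality condition
\[
 (\CQ_h \bu - \bu,\; \gradg \mu_h)_{0,F} = 0 \quad \forall\,\mu_h \in S_h(F),
\]
where $S_h(F)$ is a suitable scalar finite element space on the graded mesh on each face $F$. Since surface gradients of scalar FE functions are $\divg$-free, adding $\bz_h(\bu)$ does not affect the commuting property $\divg\Pi_{\rm RT} = \Pi_0 \circ \divg$ that $\Pi_{\rm RT}$ already enjoys, so~(\ref{Qh_commut}) holds automatically. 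Existence, uniqueness and continuous dependence of $\bz_h(\bu)$ on $\bu$ reduce to a discrete inf-sup condition between discrete $\divg$-free RT fields and surface gradients of scalar FE functions, which is classical and stable on the graded meshes under consideration. To handle the low regularity of $\bu \in \bH^s_-(\G) \cap \bH(\divg,\G)$ with arbitrarily small $s>0$ (where $\Pi_{\rm RT}\bu$ may not a priori make sense), I would precompose $\Pi_{\rm RT}$ with a Clément-Scott-Zhang type mesh-dependent smoothing that preserves the normal traces across edges and thus the divergence relation.

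The key estimate~(\ref{Qh-error}) would then be obtained by duality. For $\bphi \in \bH^{1/2}_{-}(\G)$, I would use a face-wise surface Helmholtz decomposition $\bphi|_F = \gradg \lambda + \bpsi$, where $\bpsi$ has enhanced regularity on $F$. The orthogonality condition above kills $\gradg \lambda$ up to its approximation error by discrete gradients, while $\bpsi$ is handled via $\bL^2$-approximation by $\bX_h$. Both contributions must be bounded by $h^{1/2-\eps}\|\bphi\|_{\bH^{1/2}_{-}(\G)}$ using the graded-mesh Raviart-Thomas error bound of Lemma~\ref{lem_RT-error} together with the anisotropic seminorm machinery of Theorem~\ref{thm_RT_diag_scaling}, and a matching approximation estimate for the scalar part. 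Pairing with $\|\bu - \Pi_{\rm RT}\bu\|_{\bH(\divg,\G)}$ and taking the supremum over $\bphi$ yields~(\ref{Qh-error}).

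The main obstacle will be in the duality step: to obtain the factor $h^{1/2-\eps}$ \emph{independent of the grading parameter $\beta$}, even though the raw $\bL^2$-interpolation rate of $\Pi_{\rm RT}$ on anisotropic graded elements degrades like $h^{1-\beta/2}$ (cf.\ Lemma~\ref{lem_RT-error}). Naive application of Lemma~\ref{lem_RT-error} to the dual field $\bphi$ would reintroduce a $\beta$-dependent loss and would not suffice. The resolution is to keep the local anisotropic seminorms $|\cdot|_{AH^s_l}$ of Section~\ref{sec_rt_stab} on each element of the graded mesh, exploit the enhanced regularity of $\bphi$ away from the edges of $\G$, and use the superposition argument of~(\ref{eq:superposition}) to telescope the edge-refined contributions into a $\beta$-independent bound, at the price of an arbitrarily small $\eps>0$ from interpolating between $H^s$-norms. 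Making this balancing rigorous, while simultaneously ensuring the global continuity of normal fluxes of $\CQ_h \bu$ across edges of $\G$ and verifying the discrete inf-sup condition needed for the correction step, is where the real technical effort lies.
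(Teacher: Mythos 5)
Your overall strategy (a Raviart--Thomas-based projector that commutes with $\Pi_0$, analysed by a duality argument face by face) is the right one and matches the paper's, but there are two substantive problems.

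First, the construction. You set $\CQ_h\bu := \Pi_{\rm RT}\bu + \bz_h(\bu)$ with a $\divg$-free correction $\bz_h$ chosen so that $\CQ_h\bu - \bu$ is $\bL^2$-orthogonal to $\gradg S_h(F)$. On a simply connected face $F$, $\divg$-free fields in $\RT_0$ with vanishing normal trace on $\partial F$ are $L^2$-orthogonal to all gradients, so such a correction cannot enforce your orthogonality condition; and if you allow $\bz_h$ to carry a nonzero normal trace you break the global $\bH(\divg,\G)$-conformity of $\CQ_h\bu$ across inter-face edges, which you never address. The paper instead defines $\bz_h = \CQ_h\bu|_F$ directly as the solution of a discrete mixed problem (\ref{aux_Galerkin}) on each face, with the normal trace on $\partial F$ pinned to $\Pi_{\rm RT}\bu\cdot\tbn$; this keeps conformity and makes the commuting property (\ref{Qh_commut}) fall out of the second equation. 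Well-posedness of that mixed problem is where the anisotropic stability of $\Pi_{\rm RT}$ (Lemma~\ref{lm_aux_RT-stab}, built on Theorems~\ref{thm_RT_square} and~\ref{thm_RT_triangle}) is actually used — not in a Cl\'ement-type smoothing, which is unnecessary here since $\Pi_{\rm RT}$ is already well defined on $\bH^s(F)\cap\bH(\div,F)$ for any $s>0$.

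Second, and more importantly, your duality step has a gap exactly where you flag the ``real technical effort''. After reducing to an inner product $(\bu-\bz_h,\bw)$ with an arbitrary $\bw\in\bH^{1/2-\eps}(F)$, you need to approximate the associated $\divg$-free dual field $\bq$ in $\bL^2(F)$ at rate $h^{1/2-\eps}$ \emph{independently of $\beta$}. Your proposed resolution — ``exploit the enhanced regularity of $\bphi$ away from the edges'' plus the superposition identity (\ref{eq:superposition}) — does not go through: the dual field comes from an arbitrary $\bw\in\bH^{1/2-\eps}(F)$ and has no extra interior regularity to exploit, and the superposition argument only reassembles anisotropic seminorms; it does not remove the $h_i^2 h_j^{-1}\sim h^{2-\beta}$ factors that appear element by element. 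The paper's decisive device, which your proposal misses, is to approximate $\bq$ on the \emph{coarse quasi-uniform shape-regular sub-mesh} $\Delta_h^{\rm q/u}(F)$ obtained by coalescing the long thin cells of $\Delta_h^\beta(F)$ (Figure~\ref{fig_3}), using the classical Raviart--Thomas interpolant $\Pi_{\rm RT}^{\rm q/u}$ there (and likewise $\Pi_0^{\rm q/u}$ for the scalar $\varphi$). Since $\bX_h(F)\supset\RT_0(\Delta_h^{\rm q/u}(F))$, the resulting $\bq_h,\varphi_h$ are admissible test functions, and the standard shape-regular error estimates give (\ref{q-qh})--(\ref{phi-phih}) with the clean $\beta$-independent rate. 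Without this coarsening idea, the estimate (\ref{Qh-error}) for all $\beta\in[1,3)$ is not reached.
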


Thus, the operator $\CQ_h$ inherits the crucial
commuting diagram property (\ref{Qh_commut}) of the classical RT-interpolation operator
and, at the same time,
allows to gain an extra power of $h$ when estimating the error $(\bu - \CQ_h\bu)$
in the dual norm.

\begin{corollary} \label{cor_Qh-projector}
For any $\bu \in \bH^{1/2}_{\perp}(\G)$ such that $\divg\bu \in \divg\bX_h$, one has
$\CQ_h\bu \in \bX_h$,
$\divg\CQ_h\bu = \divg\bu$, and for any $\eps>0$ there holds
\be \label{Qh_error_X}
    \|\bu - \CQ_h\bu\|_{\bX} =
    \|\bu - \CQ_h\bu\|_{\bH^{-1/2}_{\|}(\G)} \lesssim
    h^{3/2-\beta/2-\eps} \|\bu\|_{\bH^{1/2}_{\perp}(\G)}.
\ee
\end{corollary}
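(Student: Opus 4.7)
The plan is to combine the commuting diagram property~(\ref{Qh_commut}) and the gain-of-$h^{1/2}$ dual estimate~(\ref{Qh-error}) from Proposition~\ref{prop_Qh-projector} with the $\bL^{2}$-interpolation bound of Lemma~\ref{lem_RT-error}. First I would exploit the hypothesis $\divg\,\bu \in \divg\bX_{h}$: since $\divg$ maps $\bX_{h}$ onto the space of piecewise constants over $\Delta_{h}^{\beta}$, the divergence $\divg\,\bu$ is already fixed by $\Pi_{0}$, so $\Pi_{0}\divg\,\bu = \divg\,\bu$. Property~(\ref{Qh_commut}) then yields $\divg\,\CQ_{h}\bu = \divg\,\bu$, hence $\divg(\bu - \CQ_{h}\bu) = 0$, and the graph norm defining $\|\cdot\|_{\bX}$ collapses to $\|\bu - \CQ_{h}\bu\|_{\bH^{-1/2}_{\|}(\G)}$. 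This gives the first equality in~(\ref{Qh_error_X}); the membership $\CQ_{h}\bu \in \bX_{h}$ is by construction.

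Next I would bridge the two negative-order norms. The face-wise space $\bH^{1/2}_{-}(\G)$ requires no coupling between faces of $\G$, whereas $\bH^{1/2}_{\|}(\G)$ is the natural tangential trace space of $\bH^{1}(\Omega)$ and enforces the edge compatibility conditions of~\cite{BuffaC_01_TFI, BuffaCS_02_THL}, giving a continuous embedding $\bH^{1/2}_{\|}(\G) \hookrightarrow \bH^{1/2}_{-}(\G)$. Dualising with $\bL^{2}_{\rm t}(\G)$ as pivot space yields the continuous embedding $\bH^{-1/2}_{-}(\G) \hookrightarrow \bH^{-1/2}_{\|}(\G)$, which lets me replace the $\bH^{-1/2}_{\|}$-norm by the stronger $\bH^{-1/2}_{-}$-norm. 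The hypotheses of Proposition~\ref{prop_Qh-projector} are met since $\bH^{1/2}_{\perp}(\G) \subset \bH^{1/2}_{-}(\G)$ and $\divg\,\bu \in L^{2}(\G)$, so (\ref{Qh-error}) applies with some $s = 1/2$.

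To finish, I would observe that $\divg\,\Pi_{\rm RT}\bu$ coincides with $\Pi_{0}\divg\,\bu$; since $\divg\,\bu$ is already piecewise constant, this gives $\divg(\bu - \Pi_{\rm RT}\bu) = 0$, and the $\bH(\divg,\G)$-norm on the right-hand side of~(\ref{Qh-error}) collapses to $\|\bu - \Pi_{\rm RT}\bu\|_{0,\G}$. Lemma~\ref{lem_RT-error} then bounds this quantity by $h^{1-\beta/2}\|\bu\|_{\bH^{1/2}_{\perp}(\G)}$. Multiplying the two powers $h^{1/2-\eps}$ and $h^{1-\beta/2}$ produces the claimed exponent $3/2 - \beta/2 - \eps$.

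The chain of arguments is short and essentially automatic once Proposition~\ref{prop_Qh-projector} is in hand; the only point requiring a moment of care is the dual embedding $\bH^{-1/2}_{-}(\G) \hookrightarrow \bH^{-1/2}_{\|}(\G)$, which rests on correctly identifying the pivot and the direction of the face-wise vs.\ edge-compatible inclusion. Everything else is a direct chaining of the two previously established results together with the divergence-free reduction coming from $\divg\,\bu \in \divg\bX_{h}$.
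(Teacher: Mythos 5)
Your proposal is correct and follows essentially the same route as the paper: the one-line proof in the paper invokes the embedding $\|\cdot\|_{\bH^{-1/2}_{\|}(\G)} \lesssim \|\cdot\|_{\bH^{-1/2}_{-}(\G)}$, Proposition~5.1, Lemma~4.2, and the commuting diagram property for $\Pi_{\rm RT}$, which is precisely the chain you spell out. You have simply unpacked the steps the paper compresses — the identity $\divg\CQ_h\bu=\divg\bu$ via $\Pi_0\divg\bu=\divg\bu$ and (\ref{Qh_commut}), the collapse of the $\bX$-norm to the $\bH^{-1/2}_{\|}$-norm, the dual embedding obtained from $\bH^{1/2}_{\|}(\G)\hookrightarrow\bH^{1/2}_{-}(\G)$, and the reduction of the $\bH(\divg,\G)$-norm in (\ref{Qh-error}) to the $\bL^2$-norm because $\divg(\bu-\Pi_{\rm RT}\bu)=0$.
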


Since $\|\cdot\|_{\bH^{-1/2}_{\|}(\G)} \lesssim \|\cdot\|_{\bH^{-1/2}_{-}(\G)}$,
this result immediately follows from Proposition~\ref{prop_Qh-projector} and
Lemma~\ref{lem_RT-error} due to the commuting diagram property for $\Pi_{\rm RT}$.

\medskip

Since $\sfR\bX_h \subset \bH^{1/2}_{\perp}(\G)$ (see~(\ref{R-bound})) and
$\divg\sfR\bX_h = \divg\,\bX_h$ (see~(\ref{div-property})),
the following definitions are valid thanks to Proposition~\ref{prop_Qh-projector}:
\[
  \bV_h := (\CQ_h \circ \sfR) \bX_h,\quad
  \bW_h := ({\rm Id} - \CQ_h \circ \sfR) \bX_h.
\]
Using the commuting diagram property (\ref{Qh_commut}), we have
\be \label{divg_Qh_R}
    \divg\CQ_h\sfR\bu_h = \Pi_0(\divg\sfR\bu_h)
    \overset{\text{(\ref{div-property})}}{=}
    \Pi_0(\divg\bu_h) = \divg\bu_h\quad
    \forall\,\bu_h \in \bX_h.
\ee
Therefore,
\be \label{RQhR}
    \sfR\CQ_h\sfR = \sfR\quad \hbox{on $\bX_h$},
\ee
and hence $\CQ_h \circ \sfR: \bX_h \to \bX_h$ is a projection.
This fact confirms that $\bX_h = \bV_h \oplus \bW_h$.
Property (\ref{divg_Qh_R}) also implies $\bW_h \subset \bW$,
and Corollary~\ref{cor_Qh-projector} yields stability of the discrete decomposition
in the following sense: there exists $C = C(\G,\beta)$ such that
\[
  \|\CQ_h\sfR\bu_h\|_{\bX} \le C\, \|\sfR\bu_h\|_{\bH^{1/2}_{\perp}(\G)}
  \overset{\text{(\ref{R-bound})}}{\le}
  C\, \|\divg\bu_h\|_{H^{-1/2}(\G)} \le C\,\|\bu_h\|_{\bX}\quad
  \forall\,\bu_h \in \bX_h,
\]
provided that $\beta < 3$.
This verifies property \textbf{(B)} from Section~\ref{sec_bem}.

It remains to establish the gap property \textbf{(C)}.
Inequality~(\ref{gap}) in~\textbf{(C)}
is another consequence of Corollary~\ref{cor_Qh-projector}:
\beas
     \sup_{\bv_h \in \bV_h} \inf_{\bv \in \bV}
     \frac{\|\bv - \bv_h\|_{\bX}}{\|\bv_h\|_{\bX}}
     & \le &
     \sup_{\bv_h \in \bV_h}
     \frac{\|\sfR\bv_h - \bv_h\|_{\bX}}{\|\bv_h\|_{\bX}}
     \overset{\text{(\ref{RQhR})}}{=}
     \sup_{\bv_h \in \bV_h}
     \frac{\|\sfR\bv_h - \CQ_h\sfR\bv_h\|_{\bX}}{\|\bv_h\|_{\bX}}
     \\[3pt]
     & \overset{\text{(\ref{Qh_error_X})}}{\lesssim} &
     h^{3/2-\beta/2-\eps}
     \sup_{\bv_h \in \bV_h} \frac{\|\sfR\bv_h\|_{\bH^{1/2}_{\perp}(\G)}}{\|\bv_h\|_{\bX}}
     \overset{\text{(\ref{R-bound})}}{\lesssim}
     h^{3/2-\beta/2-\eps}.
\eeas
This completes the proof of Theorem~\ref{thm_converge}.

\section{Proof of Proposition~\ref{prop_Qh-projector}} \label{sec_appendix}
\setcounter{equation}{0}

In this section, we give a constructive proof of Proposition~\ref{prop_Qh-projector}.
For any $\bu \in \bH_{-}^s(\G) \cap \bH(\divg,\G)$ we construct $\CQ_h\bu$
in the Raviart-Thomas spaces on individual faces of $\G$.
Let~$\tG$ be a single face of~$\G$, and let
$\Delta_h^\beta(\tG)$ denote the restriction of the graded mesh $\Delta_h^\beta$ onto~$\tG$.
For the sake of simplicity of notation we will omit the subscript $\tG$
for differential operators over this face, e.g., we will write $\div$ for $\div_{\tG}$.
We will also write $(\cdot,\cdot)$ for the $L^2(\tG)$- and $\bL^2(\tG)$-inner products, and
similarly $\|\cdot\|$ for the corresponding norms of scalar functions and vector fields.
First, let us prove the following auxiliary result.

\begin{lemma} \label{lm_aux_RT-stab}
For any $s > 1/2$, the Raviart-Thomas interpolation operator
$\Pi_{\rm RT}: \bH^s(\tG) \cap \bH(\div,\tG) \to \RT_0(\Delta_h^\beta(\tG))$,
is $\bL^2(\tG)$-stable, i.e., there exists a constant $C>0$ independent of $h$ such that
\be \label{aux_RT-stab}
    \|\Pi_{\rm RT} \bu\|_{0,\rkserge{\tG}} \le
    C \big(\|\bu\|_{\bH^s(\tG)} + \|\div\,\bu\|_{0,\rkserge{\tG}}\big) \quad
    \forall \bu \in \bH^s(\tG) \cap \bH(\div,\tG).
\ee
\end{lemma}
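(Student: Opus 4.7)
The plan is to prove the bound element-by-element on the graded mesh $\Delta_h^\beta(\tG)$ and then assemble the local estimates using super(sub)additivity of Sobolev (semi)norms. After pulling back to a reference block as in Figure~\ref{fig_1}, the mesh consists of anisotropic rectangles $K_{ij} = I_i \times I_j$ of dimensions $h_i \times h_j$ with $i > j$ and $h_i \ge h_j$, together with isotropic triangles $K_{ii}$ of size $h_i \times h_i$. The main difficulty is that the aspect ratio $h_i/h_j$ of the rectangles is unbounded as $N \to \infty$, which prevents a clean reference-element estimate based on the isotropic $H^s$-norm; my strategy is therefore to bypass the reference element on rectangles and work directly with edge-average formulas together with the anisotropic seminorms $|\cdot|_{AH_l^s(K)}$.

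On a rectangle $K$ the lowest-order RT interpolant has the form $(\Pi_{\rm RT}\bu)_1 = a + c x_1$, where $a$ and $a + c h_i$ are the averages of $u_1$ on the two short edges $\{x_1 = 0\}$ and $\{x_1 = h_i\}$. A direct computation yields $\|a + c x_1\|_{0,K}^2 \lesssim h_i h_j (a^2 + (a + c h_i)^2)$, and Cauchy-Schwarz bounds $|a|^2$ by $h_j^{-1}$ times the squared $L^2$-norm of the trace of $u_1$ on the left short edge. I then apply the scaled one-dimensional trace inequality
\[
  |v(0)|^2 \lesssim h_i^{-1} \|v\|_{0,(0,h_i)}^2 + h_i^{2s-1} |v|_{H^s(0,h_i)}^2 \qquad (s > 1/2)
\]
to $v(\cdot) = u_1(\cdot, x_2)$ for fixed $x_2 \in I_j$ and integrate in $x_2$, which yields the key local bound
\[
  \|(\Pi_{\rm RT}\bu)_1\|_{0,K}^2 \lesssim \|u_1\|_{0,K}^2 + h_i^{2s} |u_1|_{AH_1^s(K)}^2.
\]
Since $h_i \le \diam(\tG)$, the prefactor $h_i^{2s}$ is harmless. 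A symmetric argument gives the analogous bound for $(\Pi_{\rm RT}\bu)_2$ with $AH_2^s$ replacing $AH_1^s$. No divergence contribution arises on rectangles because the two components decouple at the level of degrees of freedom.

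For the isotropic triangles $K_{ii}$ I use the standard Piola transformation with the isotropic matrix $B = h_i I$ to pull back to the reference triangle $\hat T$. Because the scaling is isotropic, the full $H^s$-norm behaves as $|\hat u_l|_{H^s(\hat T)}^2 = h_i^{2s}|u_l|_{H^s(K_{ii})}^2$ and $\|\hat\div\,\hat\bu\|_{0,\hat T}^2 = h_i^2\|\div\,\bu\|_{0,K_{ii}}^2$. Applying Theorem~\ref{thm_RT_triangle}(i) on $\hat T$ and scaling back produces
\[
  \|(\Pi_{\rm RT}\bu)_l\|_{0,K_{ii}}^2 \lesssim \|u_l\|_{H^s(K_{ii})}^2 + \|\div\,\bu\|_{0,K_{ii}}^2,
\]
again using $h_i \lesssim 1$. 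Summing the rectangle and triangle bounds over all mesh elements, the $L^2$-norms add exactly, the fractional $H^s$-seminorms are subadditive because their integrand on $K \times K$ lives inside $\tG \times \tG$, and the anisotropic seminorms satisfy $\sum_{K} |u_l|_{AH_l^s(K)}^2 \le |u_l|_{AH_l^s(\tG)}^2$ by Fubini since the mesh lines are aligned with the reference coordinate axes (compare the superposition argument in~(\ref{eq:superposition})). Finally, the face-level analog of~(\ref{serge9/05:11}) provides $|u_l|_{AH_l^s(\tG)} \lesssim \|u_l\|_{H^s(\tG)}$, which yields the claimed bound. The main obstacle is the treatment of the rectangles: their unbounded aspect ratio forbids using the full $H^s$-norm on the reference square via the Piola transform, and the edge-average representation together with the anisotropic seminorm $AH_l^s$, whose scaling matches the element dimensions, is what makes the argument go through.
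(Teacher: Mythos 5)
Your proposal is correct and follows the same overall structure as the paper's proof: reduce to the reference block $\hat T$, estimate the interpolant element-by-element on the anisotropic rectangles $K_{ij}$ ($i>j$) and the shape-regular triangles $K_{ii}$, and then assemble via the superposition argument of~(\ref{eq:superposition}) and the superadditivity of $H^s$-seminorms. Your treatment of the triangles (isotropic Piola pull-back plus Theorem~\ref{thm_RT_triangle}~(i)) reproduces exactly the paper's estimate~(\ref{aux_RT_scaling_1}). Where you genuinely diverge is on the rectangles: you work directly on the physical element $K_{ij}$, writing $(\Pi_{\rm RT}\bu)_1 = a + cx_1$ with $a$ and $a+ch_i$ the edge averages of $u_1$ on the short vertical edges, and bound these by Cauchy--Schwarz plus the scaled one-dimensional trace inequality in the $x_1$-variable, integrating over $x_2\in I_j$. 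This gives the local estimate $\|(\Pi_{\rm RT}\bu)_1\|^2_{0,K_{ij}} \lesssim \|u_1\|^2_{0,K_{ij}} + h_i^{2s}|u_1|^2_{AH_1^s(K_{ij})}$, which is even slightly sharper than the paper's~(\ref{aux_RT_scaling_2}) because the $AH_2^s$ term does not appear. The paper instead \emph{does} apply the Piola transform to the reference square $\hat Q$ and invokes Theorem~\ref{thm_RT_square}~(i), but then uses the anisotropic characterisation~(\ref{serge9/05:4}) of $\|\cdot\|_{H^s(\hat Q)}$ to track the aspect-ratio-dependent scaling of each piece separately --- so the aspect ratio does not, as you suggest, rule out the reference-square route; it merely forces the $H^s$-norm on $\hat Q$ to be split into its $L^2$ and two $AH_l^s$ parts before scaling. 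In short, you trade a reuse of the earlier reference-element stability result for a more self-contained 1D trace argument tailored to the rectangle geometry; both deliver the same final bound~(\ref{aux_RT-stab}).
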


\begin{proof}
Similarly to the proof of Lemma~\ref{lem_RT-error}, it is sufficient to establish~(\ref{aux_RT-stab})
for the unit triangle $\hat T$ partitioned into elements as shown in Figure~\ref{fig_1}
(this is because the affine transformations that map triangular blocks of elements $T_{F}\,{\subset}\,\tG$
onto $\hat T$ are independent~of~$h$).

The graded mesh on $\hat T$ (see Figure~\ref{fig_1}) comprises anisotropic quadrilaterals $K_{ij} = I_i \times I_j$
($i,j = 1,\ldots,N,\ i>j$) isomorphic to $(0,h_i) \times (0,h_j)$ with $h_i \ge h_j$
and shape-regular triangles $K_{ii}$ isomorphic to the triangle with vertices $(0,0)$, $(0,h_i)$, $(h_i,h_i)$.
Using the Piola transform associated with the mapping $\hat K \to K_{ij}$
($i \ge j$; $\hat K = \hat T$ or $\hat Q$),
we define  $\hat\bu \in \bH^{s}(\hat K) \cap \bH(\div,\hat K)$.
Then, by the standard properties of the Piola transform we have
\[
     \|\hat u_1\|^2_{0,\hat K} \simeq
     h_i^{-1}\, h_j\, \|u_1\|^2_{0,K_{ij}}, \quad
     \|\hat\div\, \hat\bu\|^2_{0,\hat K} \simeq {h_i h_j}\,\|\div\,\bu\|^2_{0,K_{ij}},
\]
\[
     \|(\Pi_{\rm RT}\bu)_1\|^2_{0,K_{ij}} \simeq
     h_i\, h_j^{-1}\, \|(\hat \Pi_{\rm RT} \hat\bu)_1\|^2_{0,\hat K}.
\]
The application of the scaling argument yields:
\beas
     |\hat u_1|^2_{H^{s}(\hat T)}
     & = &
     \int_{\hat T} \int_{\hat T}
     \frac{|\hat u_1(\hat\bx) - \hat u_1(\hat\boldy)|^2}{(|\hat x_1 - \hat y_1|^2 + |\hat x_2 - \hat y_2|^2)^{1+s}}
     d\hat\bx\,d\hat\boldy
     \\[3pt]
     & \simeq &
     h_i^2\, (h_i h_i)^{-2}\,
     \int_{K_{ii}} \int_{K_{ii}}
     \frac{|u_1(\bx) - u_1(\boldy)|^2}
          {(h_i^{-2}|x_1 - y_1|^2 + h_i^{-2}|x_2 - y_2|^2)^{1+s}}
     d\bx\,d\boldy
     \\[3pt]
     & = &
     h_i^{2s}\, |u_1|^2_{H^{s}(K_{ii})},
\eeas
\beas
     |\hat u_1|^2_{AH_1^{s}(\hat Q)}
     & = &
     \int_0^1 |\hat u_1(\cdot,\hat x_2)|^2_{H^{s}(0,1)}\,d\hat x_2
     \\[3pt]
     & = &
     h_j^{-1}\, \int_0^{h_j}
     \Bigg[
           \int_0^{h_i} \int_0^{h_i}
           \frac{h_j^2 |u_1(x_1,x_2) - u_1(y_1,x_2)|^2}
                {h_i^{-1-2s}|x_1 - y_1|^{1+2s}}\,
           \frac{dx_1\,dy_1}{h_i^2}
     \Bigg]
     dx_2
     \\[3pt]
     & \simeq &
     h_i^{2s-1} h_j\, |u_1|^2_{AH_1^{s}(K_{ij})}\quad (i > j),
\eeas
and analogously,
\[
     |\hat u_1|^2_{AH_2^{s}(\hat Q)} \simeq
     h_i^{-1} h_j^{1+2s}\, |u_1|^2_{AH_2^{s}(K_{ij})}\quad (i > j).
\]  
Therefore, applying Theorem~\ref{thm_RT_triangle}~(i), we obtain
\bea
     \|(\Pi_{\rm RT} \bu)_1\|^2_{0,K_{ii}}
     & \simeq &
     \|(\hat \Pi_{\rm RT} \hat\bu)_1\|^2_{0,\hat T} \;\lesssim\;
     \|\hat u_1\|^2_{H^{s}(\hat T)} + \|\hat\div\,\hat\bu\|_{0,\hat T}
     \nonumber
     \\[3pt]
     & \simeq &
     \|u_1\|^2_{0,K_{ii}} + h_i^{2s}\,|u_1|^2_{H^{s}(K_{ii})} + h_i^{2}\,|\div\,\bu|^2_{0,K_{ii}}.
     \label{aux_RT_scaling_1}
\eea
Similarly, applying Theorem~\ref{thm_RT_square}~(i) and recalling (\ref{serge9/05:4}),
we have for $i>j$
\bea
     \|(\Pi_{\rm RT} \bu)_1\|^2_{0,K_{ij}}
     & \simeq &
     h_i\,h_j^{-1}\, \|(\hat \Pi_{\rm RT} \hat\bu)_1\|^2_{0,\hat Q} \;\lesssim\;
     h_i h_j^{-1}\, \|\hat u_1\|^2_{H^{s}(\hat Q)}
     \nonumber
     \\[3pt]
     & \simeq &
     h_i h_j^{-1}
     \left(
           \|\hat u_1\|^2_{0,\hat Q} + |\hat u_1|^2_{AH_1^{s}(\hat Q)} +
           |\hat u_1|^2_{AH_2^{s}(\hat Q)}
     \right)
     \nonumber
     \\[3pt]
     & \simeq &
     \|u_1\|^2_{0,K_{ij}} + h_i^{2s}\,|u_1|^2_{AH_1^{s}(K_{ij})} + h_j^{2s}\,|u_1|^2_{AH_2^{s}(K_{ij})}.
     \label{aux_RT_scaling_2}
\eea
The estimates analogous to (\ref{aux_RT_scaling_1}) and (\ref{aux_RT_scaling_2})
are also valid for $\|(\Pi_{\rm RT} \bu)_2\|_{0,K_{ij}}$ with $i \ge j$.

Combining the estimates for both components of  $\Pi_{\rm RT} \bu$ over all elements in $\hat T$ and then
using the superposition argument as in (\ref{eq:superposition}) for anisotropic seminorms and
the superadditivity property of the $H^{1/2}$-norm, we arrive at the desired result.
\end{proof}

\rev{Our construction of the operator $\CQ_h$ follows
the technique used by Hiptmair and Schwab
in the proof of Lemma~8.1 in~\cite{HiptmairS_02_NBE} but relies on stability properties
of the Raviart-Thomas interpolation on graded meshes over individual faces of $\G$.}
Given $\bu \in \bH^s(\tG) \cap \bH(\div,\tG)$, $s \,{>}\, 0$, we consider the
following mixed problem:
{\em Find $(\bz,f) \,{\in}\, \bH(\div,\tG) \,{\times}\, L^2_*(\tG)$ such~that}
\be \label{aux_problem}
    \ba{rll}
    (\bz,\bv) + (\div\,\bv,f)\,\, = & (\bu,\bv) & \quad \forall \bv \in \bH_0(\div,\tG),
    \\[2pt]
                  (\div\,\bz,g)\,\, = & (\div\,\bu,g) & \quad \forall g \in L^2_{*}(\tG),
    \\[2pt]
                   \bz \cdot \tbn\,\, = & \bu \cdot \tbn & \quad \hbox{on \ $\partial\tG$}.
    \ea
\ee
Here, $L^2_{*}(\tG) := \big\{v \in L^2(\tG);\; (v,1) = 0\big\}$,
$\tbn$ is the unit outward normal vector to $\partial\tG$,
\rev{and $\bH_0(\div,\tG) := \{\bv \in \bH(\div,\tG);\; \bv\cdot\tbn|_{\partial\tG} = 0\}$}.

The unique solvability of (\ref{aux_problem}) is proved by standard techniques
(see \cite[Chapter~II]{BrezziF_91_MHF}). In fact, it is clear that the pair $(\bu,0)$
solves (\ref{aux_problem}).

A conforming Galerkin approximation of problem (\ref{aux_problem}) with Raviart-Thomas elements
on the graded mesh $\Delta_h^\beta(\tG)$ reads as:
{\em Find $(\bz_h,f_h) \in \bX_h(\tG) \times R_{h}(\tG)$ such that}
\be \label{aux_Galerkin}
    \ba{rll}
    (\bz_h,\bv) + (\div\,\bv,f_h)\,\, = & (\bu,\bv) & \quad \forall \bv \in \bX_h(\tG) \cap \bH_0(\div,\tG),
    \\[2pt]
                    (\div\,\bz_h,g)\,\, = & (\div\,\bu,g) & \quad \forall g \in R_{h}(\tG),
    \\[2pt]
                   \bz_h \cdot \tbn\,\, = & \Pi_{\rm RT}\bu \cdot \tbn & \quad \hbox{on \ $\partial\tG$}.
    \ea
\ee
Here, $\bX_h(\tG)$ denotes the restriction of $\bX_h$ onto the face $\tG$, and
$R_{h}(\tG) := \{g \in L^2(\tG);\break
 g|_{K} = \hbox{\rm const},\ \forall\,K \in \Delta_h^\beta(\tG)\ \hbox{and}\ (g,1) = 0\}$.

\rev{Note that the third equation in~(\ref{aux_Galerkin}) implies $(\div(\bu-\bz_h),1) = 0$}.
Hence, the second identity in (\ref{aux_Galerkin}) holds for any
piecewise constant function $g \in \div\,\bX_h(\tG)$.
Thus, $\div\,\bz_h$ is the $L^2(\tG)$-projection of $\div\,\bu$ onto $\div\,\bX_h(\tG)$.
In particular, if $\div\,\bu \in \div\,\bX_h(\tG)$ then $\div\,\bz_h = \div\,\bu$.

We now prove the unique solvability of (\ref{aux_Galerkin}).
First, for any $g_h \in R_{h}(\tG)$ we find a function
$\phi \in H^1_{*}(\tG) := \{\phi \in H^1(\tG);\; (\phi,1) = 0\}$ solving the \rev{variational problem}
\be \label{Neumann}
    (\grad\phi,\grad\tilde\phi) = (g_h,\tilde\phi)\quad
    \forall\,\tilde\phi \in H^1_{*}(\tG).
\ee
Applying the standard regularity result for problem (\ref{Neumann})
(see, e.g.,~\cite[p.~82]{Grisvard_92_SBV}), we conclude that $\phi \in H^{1+r}(\tG)$
with some $r \in (\frac 12, \frac{\pi}{\omega})$
(here, $\omega < 2\pi$ denotes the maximal internal angle at the vertices of $\tG$), and
\be \label{grad_phi}
    \|\grad\,\phi\|_{\bH^r(\tG)} \lesssim \|\phi\|_{H^{1+r}(\tG)} \lesssim \|g_h\|.
\ee
Therefore, $\grad\,\phi \in \bH^r(\tG) \cap \bH_0(\div,\tG)$, $r > \frac 12$, and the interpolant
$\Pi_{\rm RT}\grad\,\phi \in \bX_h(\tG) \cap \bH_0(\div,\tG)$ is well defined and stable,
due to Lemma~\ref{lm_aux_RT-stab}.
Moreover,
$\div\,(\Pi_{\rm RT}\grad\,\phi) = \Pi_0(\div\,\grad\,\phi)
 \overset{\text{(\ref{Neumann})}}{=} g_h$.
Hence, using (\ref{aux_RT-stab}) and (\ref{grad_phi}) we prove
the discrete inf-sup condition:
\beas
     \stack{\bv_h \not= \bzero}{\sup_{\bv_h \in \bX_h(\tG) \cap \bH_0(\div,\tG)}}
     \frac{(\div\,\bv_h,g_h)}{\|\bv_h\|_{\bH(\div,\tG)}}
     & \ge &
     \frac{(\div\,(\Pi_{\rm RT}\grad\,\phi),g_h)}{\|\Pi_{\rm RT}\grad\,\phi\|_{\bH(\div,\tG)}}
     \\[2pt]
     & \ge &
     \frac{\|g_h\|^2}
          {C\left(\|\grad\,\phi\|_{\bH^r(\tG)} + \|\div\,\grad\,\phi\|\right) + \|\div\,(\Pi_{\rm RT}\grad\,\phi)\|}
     \\[2pt]
     & \ge &
     \tilde C\, \|g_h\|\quad \forall\,g_h \in R_{h}(\tG).
\eeas
This condition along with the property $\div\big(\bX_h(\tG) \cap \bH_0(\div,\tG)\big) = R_{h}(\tG)$
ensures existence, uniqueness, and quasi-optimality
of the solution $(\bz_h,f_h)$ to (\ref{aux_Galerkin}) (see \cite{BrezziF_91_MHF}).
In particular, using the quasi-optimality and recalling that $\bz = \bu$, $f=0$, we estimate
\bea
     \|\bu - \bz_h\|_{\bH(\div,\tG)}
     & \lesssim &
     \stack{(\bv_h - \Pi_{\rm RT}\bu)\cdot\tbn|_{\tG} = 0}{\inf_{\bv_h \in \bX_h(\tG)}}
     \|\bu - \bv_h\|_{\bH(\div,\tG)} +
     \inf_{g_h \in R_{h}(\tG)} \|f - g_h\|
     \nonumber
     \\[4pt]
     & \lesssim &
     \|\bu - \Pi_{\rm RT}\bu\|_{\bH(\div,\tG)}.
     \label{aux_u-zh}
\eea
We now estimate $\|\bu - \bz_h\|_{\tilde\bH^{-1/2}(\tG)}$.
One has for any $\eps \in (0,\frac 12)$
\be \label{smooth_1}
    \|\bu - \bz_h\|_{\tilde\bH^{-1/2}(\tG)} \le
    \|\bu - \bz_h\|_{\tilde\bH^{-1/2+\eps}(\tG)} =
    \sup_{\bw \in \bH^{1/2-\eps}(\tG)\setminus\{\bzero\}}
    \frac{\rev{|}(\bu - \bz_h,\bw)\rev{|}}{\|\bw\|_{\bH^{1/2-\eps}(\tG)}}.
\ee
\rev{For a given $\bw \in \bH^{1/2-\eps}(\tG)$, we solve the following problem}:
{\em Find $\varphi \in H^1_*(\tG)$ such that}
\be \label{smooth_2}
  (\grad\,\varphi,\grad\,\phi) =\rev{-(\bw, \grad \,\phi)}\quad
  \forall\,\phi \in H^1_*(\tG).
\ee
Similarly to (\ref{grad_phi}), the regularity result for $\varphi$ reads as
\be \label{smooth_4}
    \varphi \in H^{3/2-\eps}(\tG),\quad
    \|\varphi\|_{H^{3/2-\eps}(\tG)} \lesssim \rev{\|\tilde f\|_{(H^{1/2+\eps}(\tG))'}}
    \lesssim \|\bw\|_{\bH^{1/2-\eps}(\tG)},
\ee
\rev{where $\tilde f \in (H^{1/2+\eps}(\tG))'$ is defined by $\tilde f(\phi) = -(\bw, \grad \,\phi)$,
$\forall\, \phi \in H^{1/2+\eps}(\tG)$}.

Then we set
\be \label{smooth_5}
    \bq := \bw + \grad\,\varphi \in \bH^{1/2-\eps}(\tG) \cap \bH_0(\div,\tG).
\ee
It \rev{also} follows from (\ref{smooth_2}) that $\div\,\bq = \div\,\bw + \div\,\grad\,\varphi = 0$.
Furthermore, we have by (\ref{smooth_4})--(\ref{smooth_5}) that
\be \label{smooth_7}
    \|\bq\|_{\bH^{1/2-\eps}(\tG)} \lesssim \|\bw\|_{\bH^{1/2-\eps}(\tG)} + \|\varphi\|_{H^{3/2-\eps}(\tG)} \lesssim
    \|\bw\|_{\bH^{1/2-\eps}(\tG)}.
\ee
We now use (\ref{smooth_5}) and integration by parts to represent the numerator in (\ref{smooth_1}) as
\beas
     (\bu - \bz_h,\bw)
     & = &
     (\bu - \bz_h,\bq) - (\bu - \bz_h,\grad\,\varphi)
     \\[2pt]
     & = &
     (\bu - \bz_h,\bq) + (\div\,(\bu - \bz_h),\varphi) - ((\bu - \bz_h)\cdot\tbn,\varphi)_{0,\partial\tG}.
\eeas
Hence, using (\ref{aux_problem}), (\ref{aux_Galerkin}) and recalling that
$\bz = \bu$, $f = 0$, we find for any $\bq_h \in \bX_h(\tG) \cap \bH_0(\div,\tG)$
and arbitrary $\varphi_h \in R_h(\tG)$
\bea
     \rev{|}(\bu - \bz_h,\bw)\rev{|}
     & = &
     \rev{|}(\bu - \bz_h,\bq - \bq_h) + (\bu - \bz_h, \bq_h)
     \nonumber
     \\[2pt]
     &   &
     +\, (\div\,(\bu - \bz_h),\varphi - \varphi_h) - ((\bu - \Pi_{\rm RT}\bu)\cdot\tbn,\varphi)_{0,\partial\tG}\rev{|}
     \nonumber
     \\[3pt]
     & = &
     \rev{|}(\bu - \bz_h,\bq - \bq_h) + (\div\,\bq_h,f_h)
     \nonumber
     \\[2pt]
     &   &
     +\, (\div\,(\bu - \bz_h),\varphi - \varphi_h) - ((\bu - \Pi_{\rm RT}\bu)\cdot\tbn,\varphi)_{0,\partial\tG}\rev{|}
     \nonumber
     \\[3pt]
     & \le &
     \|\bu - \bz_h\|\,\|\bq - \bq_h\| + |(\div\,\bq_h,f_h)|
     + \|\div\,(\bu - \bz_h)\|\,\|\varphi - \varphi_h\|
     \nonumber
     \\[2pt]
     &   &
     +\, \|(\bu - \Pi_{\rm RT}\bu)\cdot\tbn\|_{H^{-1+\eps}(\partial\tG)}\,\|\varphi\|_{H^{1-\eps}(\partial\tG)}.
     \label{smooth_9}
\eea
Let $\Pi_{\rm RT}^{\rm q/u}$ denote the Raviart-Thomas interpolation operator on the `coarse' quasi-uniform
and shape-regular mesh $\Delta_h^{\rm q/u}(\tG)$ obtained from the graded mesh $\Delta_h^{\beta}(\tG)$
by patching together long and thin elements (see Figure~\ref{fig_3}).
We also denote by $\Pi_{0}^{\rm q/u}$ the $L^2(\tG)$-projector onto the space of
piecewise constant functions on $\Delta_h^{\rm q/u}(\tG)$.
Then we set
\[
  \bq_h := \Pi_{\rm RT}^{\rm q/u}\bq \in \bX_h(\tG) \cap \bH_0(\div,\tG)\quad
  \hbox{and}\quad
  \varphi_h := \Pi_{0}^{\rm q/u}\varphi \in R_h(\tG).
\]

\begin{figure}[!htb]
\begin{center}
\includegraphics[width=1\textwidth]{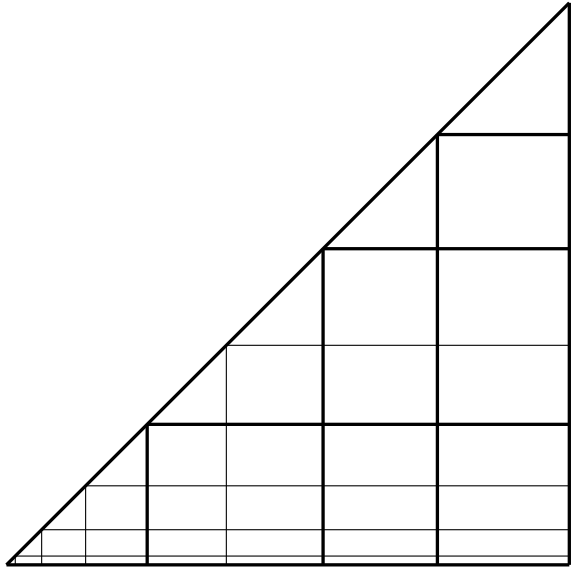}
\addtolength{\abovecaptionskip}{-2em}
\end{center}
\caption{An example of shape-regular quasi-uniform mesh (thicker lines) obtained by patching together elements
of the graded mesh (thinner lines).}
\label{fig_3}
\addtolength{\abovecaptionskip}{1em}
\end{figure}

By the standard properties of the Raviart-Thomas interpolation and the $L^2$-projection on quasi-uniform
and shape-regular meshes, we have
\be \label{div0}
    \div\,\bq_h = \Pi_{0}^{\rm q/u}\div\,\bq = 0,
\ee
\be \label{q-qh}
    \|\bq - \bq_h\| \lesssim h^{1/2-\eps}\, \|\bq\|_{H^{1/2-\eps}(\tG)}
    \overset{\text{(\ref{smooth_7})}}{\lesssim}
    h^{1/2-\eps}\, \|\bw\|_{H^{1/2-\eps}(\tG)},
\ee
\be \label{phi-phih}
    \|\varphi - \varphi_h\| \lesssim h\, \|\varphi\|_{H^{3/2-\eps}(\tG)}
    \overset{\text{(\ref{smooth_4})}}{\lesssim}
    h\, \|\bw\|_{H^{1/2-\eps}(\tG)}.
\ee
To estimate $\|(\bu - \Pi_{\rm RT}\bu)\cdot\tbn\|_{H^{-1+\eps}(\partial\tG)}$
we recall that $\int_{e_h} (\bu - \Pi_{\rm RT}\bu)\cdot\tbn = 0$ for any element edge
$e_h \subset \partial\tG$.
Therefore, we can use a standard duality argument to prove (cf.~\cite[p.~259]{BuffaC_03_EFI})
\[
  \|(\bu - \Pi_{\rm RT}\bu)\cdot\tbn\|_{H^{-1+\eps}(\partial\tG)} \lesssim
  \Big(\max_{e_h \subset \partial\tG} |e_h|\Big)^{1-\eps}
  \|(\bu - \Pi_{\rm RT}\bu)\cdot\tbn\|_{L^2(\partial\tG)}.
\]
Then by interpolation we obtain
\be \label{boundary_est}
    \|(\bu - \Pi_{\rm RT}\bu)\cdot\tbn\|_{H^{-1+\eps}(\partial\tG)} \lesssim
    h^{1/2-\eps}\,\|(\bu - \Pi_{\rm RT}\bu)\cdot\tbn\|_{H^{-1/2}(\partial\tG)} \lesssim
    h^{1/2-\eps}\,\|\bu - \Pi_{\rm RT}\bu\|_{\bH(\div,\tG)}
\ee
(here, we also used the continuity of the normal trace operator
$\bv \mapsto \bv\cdot\tbn|_{\partial\tG}$ as a mapping
$\bH(\div,\tG) \to H^{-1/2}(\partial\tG)$).

Furthermore, one has
\be \label{phi_est}
    \|\varphi\|_{H^{1-\eps}(\partial\tG)} \lesssim \|\varphi\|_{H^{3/2-\eps}(\tG)}
    \overset{\text{(\ref{smooth_4})}}{\lesssim}
    \|\bw\|_{H^{1/2-\eps}(\tG)}.
\ee
Now, using (\ref{div0})--(\ref{phi_est}) in (\ref{smooth_9}) and
recalling (\ref{aux_u-zh}) we find
\[
  \rev{|}(\bu - \bz_h,\bw)\rev{|} \lesssim
  h^{1/2-\eps}\,\|\bu - \Pi_{\rm RT}\bu\|_{\bH(\div,\tG)}\,\|\bw\|_{H^{1/2-\eps}(\tG)}.
\]
Using this estimate in (\ref{smooth_1}) we obtain
\be \label{u-zh_final}
    \|\bu - \bz_h\|_{\tilde\bH^{-1/2}(\tG)} \lesssim
    h^{1/2-\eps}\,\|\bu - \Pi_{\rm RT}\bu\|_{\bH(\div,\tG)}.
\ee

Now we can prove the desired result.

\medskip

\noindent
{\bf Proof of Proposition~\ref{prop_Qh-projector}.} 
For any $\bu \in \bH^s_{-}(\G) \cap \bH(\divg,\G)$, we define $\CQ_h\bu \in \bX_h$
face by face as $\CQ_h\bu|_{\tG} := \bz_h$ for any face $\tG \subset \G$,
where $\bz_h$ is a unique (vectorial) solution to~(\ref{aux_Galerkin}).
Then the commuting diagram property (\ref{Qh_commut}) follows from the second
identity in~(\ref{aux_Galerkin}), and inequality (\ref{u-zh_final}) yields estimate (\ref{Qh-error}).
\qed

\noindent{\bf Acknowledgement.}
A significant part of this work has been done while A.B. was visiting
Laboratoire de Math\'ematiques et ses Applications de Valenciennes,
Universit\'e de Valenciennes et du Hainaut-Cambr\'esis (Valenciennes, France).
This author is grateful to the colleagues in that department
for their hospitality and stimulating research atmosphere.

\bibliographystyle{siam}
\bibliography{../../../../../bibtex/bib}

\end{document}